%
%
%
\documentclass{amsproc}
\usepackage[all]{xy}
\usepackage{amssymb}

\usepackage{color}
\newtheorem{theorem}{Theorem}[section]
\newtheorem{lemma}[theorem]{Lemma}
\newtheorem{corollary}[theorem]{Corollary}
\newtheorem{proposition}[theorem]{Proposition}

\theoremstyle{definition}
\newtheorem{definition}[theorem]{Definition}
\newtheorem{example}[theorem]{Example}

\theoremstyle{remark}
\newtheorem{remark}[theorem]{Remark}

\numberwithin{equation}{section}



\DeclareMathOperator{\Hom}{Hom}
\DeclareMathOperator{\Ext}{Ext} 
\DeclareMathOperator{\End}{End}

\DeclareMathOperator{\Tor}{Tor} 

\DeclareRobustCommand\longtwoheadrightarrow
     {\relbar\joinrel\twoheadrightarrow}

\newcommand{\Dcal}{\ensuremath{\mathcal{D}}}
\newcommand{\Xcal}{\ensuremath{\mathcal{X}}}
\newcommand{\Ycal}{\ensuremath{\mathcal{Y}}}
\newcommand{\Tcal}{\ensuremath{\mathcal{T}}}
\newcommand{\Fcal}{\ensuremath{\mathcal{F}}}
\newcommand{\Ccal}{\ensuremath{\mathcal{C}}}

\newcommand{\Wcal}{\ensuremath{\mathcal{W}}}

\newcommand{\tube}{\ensuremath{\mathbf{t}}}

\newcommand{\Add}[1]{\mbox{\rm{Add}}(#1)}

\newcommand{\add}[1]{\mbox{\rm{add}}(#1)}

\newcommand{\Gen}[1]{\mbox{\rm{Gen}}(#1)}
\newcommand{\gen}[1]{\mbox{\rm{gen}}(#1)}

\newcommand{\Filt}[1]{\mbox{\rm{Filt}}(#1)}
\newcommand{\FiltGen}[1]{\mbox{\rm{FiltGen}}(#1)}

\newcommand{\ModA}{\ensuremath\mbox{\rm{Mod}-$A$}}
\newcommand{\modA}{\ensuremath\mbox{\rm{mod}-$A$}}
\newcommand{\ModB}{\ensuremath\mbox{\rm{Mod}-$B$}}

\newcommand{\Ker}[1]{\mbox{\rm{Ker}}(#1)}
\newcommand{\Coker}[1]{\mbox{\rm{Coker}}(#1)}
\newcommand{\Img}[1]{\mbox{\rm{Im}}(#1)}

\newcommand{\ProjA}{\ensuremath\mbox{\rm{Proj}-$A$}}
\newcommand{\projA}{\ensuremath\mbox{\rm{proj}-$A$}}

\newcommand{\ra}{\longrightarrow}

\begin{document}

\title{A characterisation of $\tau$-tilting finite algebras}

\author{Lidia Angeleri H\"ugel}
\address{Lidia Angeleri H\"ugel, Dipartimento di Informatica - Settore di Matematica, Universit\`a degli Studi di Verona, Strada le Grazie 15 - Ca' Vignal, I-37134 Verona, Italy}
\email{lidia.angeleri@univr.it}
\thanks{The authors acknowledge funding from the Project ``Ricerca di Base 2015'' of the University of Verona. Additionally, the third named author acknowledges support from the Department of Computer Sciences of the University of Verona in the earlier part of this project, as well as from the Engineering and Physical Sciences Research Council of the United Kingdom, grant number EP/N016505/1, in the later part of this project.}

\author{Frederik Marks}
\address{Frederik Marks, Institut f\"ur Algebra und Zahlentheorie, Universit\"at Stuttgart, Pfaffenwaldring 57, 70569 Stuttgart, Germany}
\email{marks@mathematik.uni-stuttgart.de}

\author{Jorge Vit\'oria}
\address{Jorge Vit\'oria, Department of Mathematics, City, University of London, Northampton Square, London EC1V 0HB, United Kingdom}
\email{jorge.vitoria@city.ac.uk}

\subjclass[2010]{Primary 16G20; Secondary 16S85, 16S90}


\keywords{$\tau$-tilting finite algebra, silting module, ring epimorphism}

\begin{abstract}
We prove that a finite dimensional algebra is $\tau$-tilting finite if and only if it does not admit large silting modules. Moreover, we show that for a $\tau$-tilting finite algebra $A$ there is a bijection between isomorphism classes of basic support $\tau$-tilting (that is, finite dimensional silting) modules and equivalence classes of ring epimorphisms $A\longrightarrow B$ with $\Tor_1^A(B,B)=0$. It follows that a finite dimensional algebra is $\tau$-tilting finite if and only if there are only finitely many equivalence classes of such ring epimorphisms. 
\end{abstract}

\maketitle


\section*{Introduction}
Adachi, Iyama and Reiten  \cite{AIR} defined  support $\tau$-tilting modules  in the context of finite dimensional algebras,  motivated by the study of mutation in cluster theory. Their work has inspired the notion of a silting module, which was introduced  in  \cite{AMV1} with the aim of exporting support  $\tau$-tilting modules to arbitrary rings. In this broader context, however,  it is necessary to allow for large modules. For example, over a commutative ring $A$  there are no non-trivial finitely presented silting modules, but there is a rich supply of large silting modules  parametrising the Gabriel localisations of $A$, see \cite{AH} for details. Also over finite dimensional algebras some phenomena only become visible  when leaving the finite dimensional world.  For example, as illustrated in \cite{AMV2}, certain posets associated to the algebra, such as the poset of noncrossing partitions considered in \cite{IT}, can be  completed to lattices by dropping finiteness assumptions. 
Hence, it is often useful to pass to large modules - provided they do exist.
 
A well-known result due to Auslander states that a finite dimensional algebra does not admit  large indecomposable modules if and only if it has finite representation type. 
In this note, we will ask about the existence of large  silting modules,  that is, infinite dimensional silting modules which are not obtained trivially as direct sums  of finite dimensional ones. We will  prove that a finite dimensional algebra  does not admit large silting modules if and only if it is $\tau$-tilting finite. This  class of algebras introduced in \cite{DIJ} encompasses representation-finite algebras, local  algebras, and preprojective algebras of Dynkin type \cite{Mi}. An algebra $A$  is called $\tau$-tilting finite if there are only finitely many isomorphism classes of basic support $\tau$-tilting (that is, finite dimensional silting) modules. We show that the latter correspond bijectively to equivalence classes of ring epimorphisms $A\longrightarrow B$ with $\Tor_1^A(B,B)=0$. As a consequence, we obtain that a finite dimensional algebra $A$ is $\tau$-tilting finite if and only if there are only finitely many equivalence classes of such ring epimorphisms. 
 

\subsection*{Notation}
Throughout this note, let $A$ be a (unital) ring, $\ModA$ the category of right $A$-modules, and $\modA$ its subcategory of finitely presented modules. We denote by $\ProjA$ and $\projA$ the class of all projective and of all finitely generated projective right $A$-modules, respectively. Furthermore, we write $\mathsf{D}(A)$ for the derived category of $\ModA$. Given a module $M\in\ModA$,  we denote by $\Add{M}$ the class of all modules which are isomorphic to direct summands of direct sums of copies of $M$, by $\Gen{M}$ the class of all $M$-generated modules, i.~e.~all modules $N$ admitting a surjection $M^{(I)}\ra N$ for some set $I$, and we set $\gen{M}=\Gen{M}\cap\modA$. Moreover, we define
$$M^\circ=\{X\in \ModA \,\mid\, \Hom_A(M,X)=0\}$$
$$M^{\perp_1}=\{X\in \ModA \,\mid\, \Ext_A^1(M,X)=0\}.$$ 
All finite dimensional algebras considered in this note are finite dimensional over an algebraically closed field $\mathbb{K}$.


\section{Silting modules and ring epimorphisms}
\subsection{Silting modules}
We first introduce silting modules over a ring $A$.
Given a set $\Sigma$  of morphisms in  $\ProjA$, we define the subcategory
$$\Dcal_\Sigma:=\{X\in \ModA \,\mid\, \Hom_A(\sigma,X)\ \text{is surjective for all } \sigma\in\Sigma \}.$$
Note that, if we identify a morphism $\sigma$ with its cone in $\mathsf{D}(A)$, then we can write
$$\Dcal_\Sigma=\{X\in \ModA \,\mid\, \Hom_{\mathsf{D}(A)}(\sigma,X[1])=0\ \text{for all } \sigma\in\Sigma \}.$$
If $\Sigma$ contains just one element $\sigma$, we write $\Dcal_\sigma$.

\begin{definition}\cite{AMV1}\label{def p silting}
We say that an $A$-module $T$ is
\begin{itemize}
\item  \emph{partial silting} if it admits a projective presentation $ P\stackrel{\sigma}{\longrightarrow} Q\ra T\ra 0$ such that 
$\mathcal{D}_\sigma$ is a torsion class containing $T$;
\item \emph{silting} if it admits  a projective presentation $ P\stackrel{\sigma}{\longrightarrow} Q\ra T\ra 0$ such that 
$\mathcal{D}_\sigma=\Gen{T}$.
\end{itemize}
We say that $T$ is (partial) silting \emph{with respect to} $\sigma$. The torsion class $\Gen{T}$ generated by a silting module $T$ is called a \emph{silting class}. Two silting modules $T$ and $T'$ are said to be \emph{equivalent} if they generate the same silting class or, equivalently, $\Add{T}=\Add{T^\prime}$. 
\end{definition}
 
By  \cite[Theorem 3.12]{AMV1}  every partial silting module $T_1$ with respect to a presentation $\sigma_1$ can be completed to a silting module ${T}$, called the {\em Bongartz completion} of $T_1$, such that $\Gen{T}=\Dcal_{\sigma_1}$. It was also shown in \cite{AMV1} that every silting module $T$ induces an exact sequence of the form
\begin{equation}\label{approx seq} 
\xymatrix{A\ar[r]^f&T_0\ar[r]&T_1\ar[r]&0}
\end{equation}
where $T_0$ and $T_1$ are in $\Add{T}$ and $f$ is a left $\Add{T}$-approximation.
 
Recall that a module $T$ is \emph{tilting} if it is silting with respect to a projective resolution $ 0\longrightarrow P\stackrel{\sigma}{\longrightarrow} Q\ra T\ra 0$, or equivalently, if $T^{\perp_1}=\Gen{T}$. In this case, the module $T_1$ in (\ref{approx seq}) is a partial silting module (actually, partial tilting).
We do not know, however, whether this is always true. In this subsection we explore some cases that allow us to conclude that $T_1$ is partial silting. 

We say that an $A$-module $M$ admits a \emph{presilting presentation} if there are $P$ and $Q$ in $\ProjA$ and an exact sequence
$$\xymatrix{P\ar[r]^{\sigma} & Q\ar[r] & M\ar[r] & 0}$$
such that $M^{(I)}\in\mathcal{D}_{\sigma}$ or, equivalently, 
$\Hom_{\mathsf{D}(A)}({\sigma},{{\sigma}^{(I)}[1]})=0$, for all sets $I$.

\begin{example}\label{prespres}
\begin{enumerate}
\item It follows from the definition above that every partial silting module admits a presilting presentation.
\item If a module $M$ admits both a minimal projective presentation $p$ and a presilting presentation, then $p$ is a presilting presentation. Indeed, $p$ must be a summand of the presilting presentation, making it presilting as well.
\end{enumerate}
\end{example}

\begin{proposition}\label{Prop silting}
Let $T_0$ be an $A$-module, and let $A\overset{f}{\ra} T_0\ra T_1\ra 0$ be an $\Add{T_0}$-approximation sequence. If $T_0$ admits a presilting presentation, then $T=T_0\oplus T_1$ is a silting $A$-module, and $T_1$ is a partial silting module with respect to a projective presentation $\sigma_1$ such that $\Gen{T}=\Dcal_{\sigma_1}$.
\end{proposition}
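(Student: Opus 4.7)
The plan is to assemble, from the given data $(\sigma, f)$, a single projective presentation $\sigma_1$ whose cokernel is $T_1$ and whose class $\Dcal_{\sigma_1}$ coincides with $\Gen{T}$. Using that $A$ is projective, I would lift $f: A \to T_0$ along the canonical projection $\pi: Q \to T_0$ to a map $\tilde f: A \to Q$ with $\pi\tilde f = f$, and set $\sigma_1 := (\sigma, \tilde f): P \oplus A \to Q$. A direct computation yields $\Coker{\sigma_1} = T_0/\Img{f} = T_1$, so $\sigma_1$ is a projective presentation of $T_1$ and $\sigma \oplus \sigma_1$ is one of $T = T_0 \oplus T_1$. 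Testing $\Hom_A(\sigma_1, X)$ on pairs of the form $(g_1, 0)$ immediately gives $\Dcal_{\sigma_1} \subseteq \Dcal_{\sigma}$, whence $\Dcal_{\sigma \oplus \sigma_1} = \Dcal_{\sigma_1}$. Since $T_1$ is a quotient of $T_0$, we have $\Gen{T} = \Gen{T_0}$, and the whole proposition reduces to the equality $\Dcal_{\sigma_1} = \Gen{T_0}$.

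For the inclusion $\Dcal_{\sigma_1} \subseteq \Gen{T_0}$ I would use the following clean observation. Given $X \in \Dcal_{\sigma_1}$ and $x \in X$, take $g_x: A \to X$ sending $1 \mapsto x$ and lift $(0, g_x)$ along $\sigma_1$ to produce $h: Q \to X$ with $h\sigma = 0$ and $h\tilde f = g_x$. The vanishing $h\sigma = 0$ forces $h$ to factor as $h = h'\pi$ for some $h': T_0 \to X$, and then $h'(f(1)) = h(\tilde f(1)) = x$. Hence every element of $X$ lies in the image of some morphism $T_0 \to X$, so $X \in \Gen{T_0}$.

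The reverse inclusion $\Gen{T_0} \subseteq \Dcal_{\sigma_1}$ is where the approximation hypothesis must be invoked, and I expect this step to be the main obstacle. Fix $X \in \Gen{T_0}$, a surjection $\epsilon: T_0^{(I)} \twoheadrightarrow X$, and a pair $(g_1, g_2): P \oplus A \to X$. First I would produce $h_1: Q \to X$ with $h_1\sigma = g_1$ using that $\Gen{T_0} \subseteq \Dcal_{\sigma}$, which holds because $T_0^{(I)} \in \Dcal_{\sigma}$ by the presilting assumption and $\Dcal_{\sigma}$ is closed under quotients ($\sigma$ being a map of projectives). It then remains to produce $h': T_0 \to X$ with $h'f = g_2 - h_1\tilde f$; equivalently, the morphism $g_2 - h_1\tilde f: A \to X$ must be factored through $f$. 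Here the approximation property is decisive: lift $g_2 - h_1\tilde f$ along $\epsilon$ (using $A$ projective) to $\phi: A \to T_0^{(I)}$; since $f$ is a left $\Add{T_0}$-approximation and $T_0^{(I)} \in \Add{T_0}$, factor $\phi = \psi f$ for some $\psi: T_0 \to T_0^{(I)}$; and set $h' := \epsilon\psi$. Then $h := h_1 + h'\pi$ satisfies $h\sigma_1 = (g_1, g_2)$, since $\pi\sigma = 0$ and $\pi\tilde f = f$. The subtlety is precisely that the left-approximation property lives on $\Add{T_0}$ and must be transported to effective lifting against arbitrary modules in $\Gen{T_0}$ via the projective lift through $\epsilon$.

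With $\Dcal_{\sigma \oplus \sigma_1} = \Dcal_{\sigma_1} = \Gen{T}$ established, $T$ is silting with respect to $\sigma \oplus \sigma_1$, and $\Gen{T}$ is therefore a torsion class. Since $T_1 \in \Add{T} \subseteq \Gen{T} = \Dcal_{\sigma_1}$, the module $T_1$ is partial silting with respect to $\sigma_1$, yielding both conclusions.
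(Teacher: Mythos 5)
Your proof is correct and follows essentially the same route as the paper's: your $\sigma_1 = (\sigma,\tilde f)\colon P\oplus A\to Q$ is precisely an explicit two-term model of the cone of the lift $A\to\sigma_0$ in $\mathsf{D}(A)$ that the paper uses, and both arguments reduce to $\Dcal_{\sigma_1}=\Gen{T_0}$, proved by invoking the presilting hypothesis for $\Gen{T_0}\subseteq\Dcal_{\sigma_1}$ and the left $\Add{T_0}$-approximation property for the other inclusion. The paper packages the two inclusions into the long exact sequence obtained by applying $\Hom_{\mathsf{D}(A)}(-,X)$ to the triangle $A\to\sigma_0\to\sigma_1\to A[1]$, together with the observation that surjectivity of $\Hom_A(f,X)$ characterises $\Gen{T_0}$, whereas you unpack exactly the same information into module-level diagram chases using projectivity of $A$, $P$, $Q$.
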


\begin{proof}
Let $\sigma_0$ be a presilting presentation of $T_0$. Then, the $A$-module map $f$ can be lifted to a morphism of chain complexes $A\longrightarrow \sigma_0$. Consider the induced triangle in the derived category $\mathsf{D}(A)$
$$\xymatrix{A\ar[r] & \sigma_0\ar[r] & \sigma_1\ar[r] & A[1].}$$
By construction, $\sigma_1$ is a projective presentation of $T_1$. We show that $T=T_0\oplus T_1$ is a silting $A$-module with respect to the presentation $\sigma_0\oplus\sigma_1$, that is, $\Gen{T}=\Dcal_{\sigma_0\oplus\sigma_1}$. First, observe that $\Dcal_{\sigma_0\oplus\sigma_1}=\Dcal_{\sigma_1}$. The inclusion $\Dcal_{\sigma_0\oplus\sigma_1}\subseteq\Dcal_{\sigma_1}$ is clear. For the converse, apply the functor $\Hom_{\mathsf{D}(A)}({-},{X[1]})$ for $X$ in $\Dcal_{\sigma_1}$ to the triangle above. Since $\Hom_{\mathsf{D}(A)}({A},{X[1]})=0=\Hom_{\mathsf{D}(A)}({\sigma_1},{X[1]})$, we also get $\Hom_{\mathsf{D}(A)}({\sigma_0},{X[1]})=0$ or, in other words, $X\in\Dcal_{\sigma_0}$. Hence, we conclude that $X$ lies in $\Dcal_{\sigma_0}\cap\Dcal_{\sigma_1}=\Dcal_{\sigma_0\oplus\sigma_1}$.

It remains to show that $\Gen{T}=\Gen{T_0}=\Dcal_{\sigma_1}$. Using that $f$ is a left $\Add{T_0}$-approximation, we see that an $A$-module $X$ belongs to $\Gen{T_0}$ if and only if the map $\Hom_{A}({f},{X}):\Hom_{A}({T_0},{X})\longrightarrow \Hom_{A}({A},{X})$  is surjective. This amounts to the surjectivity of the map $\pi:\Hom_{\mathsf{D}(A)}({\sigma_0},{X})\longrightarrow \Hom_{\mathsf{D}(A)}({A},{X})$ which is obtained by applying the functor $\Hom_{\mathsf{D}(A)}({-},{X})$ to the triangle above. 
Considering the  exact sequence 
$$\Hom_{\mathsf{D}(A)}({\sigma_0},{X})\stackrel{\pi}{\longrightarrow}\Hom_{\mathsf{D}(A)}({A},{X})\rightarrow\Hom_{\mathsf{D}(A)}({\sigma_1},{X[1]})\rightarrow\Hom_{\mathsf{D}(A)}({\sigma_0},{X[1]}),$$
it follows immediately that $\Gen{T_0}\supset\Dcal_{\sigma_1}$. Moreover,  since
$\sigma_0$ is a presilting presentation of $T_0$, the class $\Dcal_{\sigma_0}$ contains $\Add{T_0}$ and therefore also $\Gen{T_0}$. So, if $X\in\Gen{T_0}$, then $\Hom_{\mathsf{D}(A)}({\sigma_0},{X}[1])=0$ and $\pi$ is surjective, hence $\Hom_{\mathsf{D}(A)}({\sigma_1},{X[1]})=0$. This proves that $\Gen{T_0}\subset\Dcal_{\sigma_1}$. 
\end{proof}

\begin{corollary}\label{perfect} 
If $T$ is a silting module such that every module in $\Add{T}$ admits a minimal projective presentation, then any module $T_1$ appearing in a left $\Add{T}$-approximation sequence of the form (\ref{approx seq}) is partial silting with respect to a projective presentation $\sigma_1$ such that $\Dcal_{\sigma_1}=\Gen{T}$.
\end{corollary}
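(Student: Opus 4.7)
The plan is to reduce the statement to Proposition \ref{Prop silting} applied to $T_0$. Given the approximation sequence $A \stackrel{f}{\longrightarrow} T_0 \rightarrow T_1 \rightarrow 0$ from (\ref{approx seq}), the inclusion $\Add{T_0}\subseteq\Add{T}$ ensures that $f$ is also a left $\Add{T_0}$-approximation, so the main task is to exhibit a presilting presentation of $T_0$. Given such a presentation, Example \ref{prespres}(2) combined with the hypothesis that $T_0\in\Add{T}$ admits a minimal projective presentation $p_0$ will make $p_0$ itself presilting, enabling the invocation of Proposition \ref{Prop silting}.

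First I would observe that the silting presentation $\sigma$ of $T$ is in particular a presilting presentation, since $T^{(J)}\in\Gen{T}=\Dcal_\sigma$ for every set $J$. Using the hypothesis for $T\in\Add{T}$, let $p_T$ be the minimal projective presentation of $T$; Example \ref{prespres}(2) then makes $p_T$ presilting. Since $\Dcal_{p_T^{(I)}}=\Dcal_{p_T}$, the presentation $p_T^{(I)}$ is a presilting presentation of $T^{(I)}$ for every set $I$.

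Next, pick a set $I$ and a module $T_0'\in\Add{T}$ with $T_0\oplus T_0'\cong T^{(I)}$, and let $p_0'$ be the minimal projective presentation of $T_0'$ supplied by the hypothesis. The finite direct sum $p_0\oplus p_0'$ is then a minimal projective presentation of $T^{(I)}$, because a finite direct sum of projective covers is a projective cover. Applying Example \ref{prespres}(2) now to $T^{(I)}$, which admits both the minimal presentation $p_0\oplus p_0'$ and the presilting presentation $p_T^{(I)}$, we conclude that $p_0\oplus p_0'$ is presilting. Since $\Dcal_{p_0\oplus p_0'}=\Dcal_{p_0}\cap\Dcal_{p_0'}$ and $\Dcal_{p_0}$ is closed under direct summands, the summand $p_0$ is also presilting.

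Proposition \ref{Prop silting} then produces $T_1$ as a partial silting module with respect to a projective presentation $\sigma_1$ satisfying $\Dcal_{\sigma_1}=\Gen{T_0\oplus T_1}$. To finish, I need $\Gen{T_0\oplus T_1}=\Gen{T}$: one inclusion follows from $T_0,T_1\in\Add{T}$, while the other uses the left $\Add{T}$-approximation property of $f$ applied to $T$ itself — every element of $T$ lies in the image of some map $T_0\to T$, placing $T\in\Gen{T_0}\subseteq\Gen{T_0\oplus T_1}$. The main technical obstacle is the transfer of the presilting condition between $T^{(I)}$ and its summand $T_0$, which rests on the standard fact that finite direct sums of minimal projective presentations remain minimal; once this is handled, the rest of the argument is a clean application of Example \ref{prespres}(2) and Proposition \ref{Prop silting}.
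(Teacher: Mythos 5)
Your proof is correct and follows the same route as the paper: reduce to Proposition~\ref{Prop silting} by exhibiting a presilting presentation of $T_0$, obtained by writing $T_0$ as a summand of $T^{(I)}$, observing that the minimal projective presentation of $T_0$ is a summand of the minimal projective presentation of $T^{(I)}$, and invoking Example~\ref{prespres}(2). You simply spell out a few details the paper compresses (that the direct sum of two minimal presentations is minimal, that a summand of a presilting presentation is presilting, and that $\Gen{T_0\oplus T_1}=\Gen{T}$), all of which are correct.
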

\begin{proof}
By Proposition~\ref{Prop silting}, it is enough to show that $T_0$ admits a presilting presentation. Since $T_0$ lies in $\Add{T}$, it is a summand of $T^{(I)}$, for some set $I$. Hence, the minimal projective presentation of $T_0$ is a summand of the minimal projective presentation of $T^{(I)}$. Since $T$ has a presilting presentation, it follows from Example~\ref{prespres}(2) that the minimal projective presentation of $T^{(I)}$ is presilting and, therefore, so is the minimal projective presentation of $T_0$.
\end{proof}

There is a different way of interpreting the statements above.
Recall from \cite{Wei} that if $T$ is a silting module with respect to $\sigma$, then there is a triangle
\begin{equation}\label{approx tria}
\xymatrix{A\ar[r]^\phi&\sigma_0\ar[r]&\sigma_1\ar[r]&A[1]}
\end{equation}
in $\mathsf{D}(A)$, where $\sigma_0$ and $\sigma_1$ lie in $\Add{\sigma}$ and $\phi$ is a left $\Add{\sigma}$-approximation of $A$. Note that applying the functor $H^0(-)$ to the triangle above yields a left $\Add{T}$-approximation sequence in $\ModA$ as in (\ref{approx seq}) for which $T_1:=H^0(\sigma_1)$ is partial silting, by Proposition \ref{Prop silting}. In fact, if every module in $\Add{T}$ admits a minimal projective presentation, then every exact sequence as in (\ref{approx seq}) can be lifted to a triangle as in $(\ref{approx tria})$. This is precisely Corollary \ref{perfect}. 

\begin{definition} 
Let $T$ be a silting module with respect to $\sigma$. If the map $\phi$ in the triangle (\ref{approx tria}) above can be chosen left-minimal (i.~e.~any $h\in\End_{\mathsf{D}(A)}(\sigma_0)$ with $hf=f$ is an isomorphism), then $T$ is said to be a {\em minimal} silting module. 
\end{definition} 

Using that left-minimal maps are essentially unique, we can associate to every minimal silting module $T$ a canonical partial silting module $T_1:=H^0(\mathsf{cone}(\phi))$. This assignment will become useful later in this note.

\begin{remark} If $A$ is (right) hereditary or perfect, then a silting module $T$ with respect to $\sigma$ is minimal if and only if there is an exact sequence as in (\ref{approx seq}) for which the left $\Add{T}$-approximation $f$ can be chosen left-minimal, compare with \cite[Definition 5.4]{AMV2}.

Indeed, it is true for any ring $A$ that a left-minimal $\Add{\sigma}$-approximation of $A$ in $\mathsf{D}(A)$ induces a left-minimal $\Add{T}$-approximation of $A$ in $\ModA$. To see this, given a left-minimal $\Add{\sigma}$-approximation $\phi:A\longrightarrow \sigma_0$, we check that for any endomorphism $g$ of $H^0(\sigma_0)$ fulfilling $gH^0(\phi)=H^0(\phi)$ there is an endomorphism $\tilde{g}$ of $\sigma_0$ in $\mathsf{D}(A)$ such that $H^0(\tilde{g})=g$ and $\tilde{g}\phi=\phi$. The minimality of $\phi$ then guarantees that $\tilde{g}$ is an isomorphism and, hence, so is $g$.

For the converse, we first need to assure that a left $\Add{T}$-approximation $f:A\longrightarrow T_0$ in $\ModA$ can be lifted to a left $\Add{\sigma}$-approximation $\phi:A\longrightarrow\sigma_0$ in $\mathsf{D}(A)$. Here, we use that if $A$ is (right) hereditary or perfect, then $H^0$ induces a dense functor $\Add{\sigma}\longrightarrow\Add{T}$ (see also Corollary \ref{perfect}). Secondly, observe that we can choose $\phi$ to be left-minimal provided that $f$ is left-minimal. Indeed, in the hereditary (respectively, perfect) case we can choose $\phi$ with $\sigma_0$ being a monomorphic projective presentation (respectively, a minimal projective presentation) of $T_0$.
\end{remark}


\subsection{Ring epimorphisms.} We first recall some notions and basic results.
\begin{definition}
A {ring homomorphism} $f:A\longrightarrow B$  is a 
{\em ring epimorphism} if it is an epimorphism in the category of rings with unit, or equivalently, if the functor given by restriction of scalars $f_\ast:\ModB\longrightarrow \ModA$ is fully faithful.
\end{definition}

Two ring epimorphisms $f_1:A\longrightarrow B_1$ and  $f_2:A\longrightarrow B_2$ are said to be  {\em equivalent} if there is an isomorphism of rings $h: B_1\longrightarrow B_2$ such that $f_2=h\circ f_1$. We then say that $f_1$ and $f_2$ lie in the same {\em epiclass} of $A$.

\begin{definition}
A full subcategory $\Xcal$ of $\ModA$ is called {\em bireflective} if the inclusion functor $\Xcal\longrightarrow\ModA$ admits both a left and right adjoint, or equivalently, if it is closed under products, coproducts, kernels and cokernels. A full subcategory $\Wcal$ of $\modA$ is said to be {\em wide} if it is closed under kernels, cokernels and extensions.
\end{definition}

\begin{theorem}\label{epicl}
\begin{enumerate}
\item \cite{GdP,GL} There is a bijection between
\begin{itemize}
\item epiclasses of ring epimorphisms $A\ra B$;
\item bireflective subcategories of $\ModA$
\end{itemize}
mapping a ring epimorphism  $A\ra B$ to  the essential image $\Xcal_B$ of its restriction functor.
\item \cite[Theorem 4.8]{Sch} The bijection in (1) restricts to a bijection between  
\begin{itemize}
\item epiclasses of ring epimorphisms $A\ra B$ with $\Tor_1^A(B,B)=0$;
\item bireflective subcategories closed under extensions in $\ModA$.
\end{itemize}
\item \cite[Theorem 1.6.1]{I} If $A$ is a finite dimensional algebra, mapping a ring epimorphism $A\ra B$ to $\Xcal_B\cap\modA$ defines a bijection between
\begin{itemize}
\item epiclasses of ring epimorphisms $A\ra B$ with $B$  finite dimensional  and $\Tor_1^A(B,B)=0$; 
\item functorially finite wide subcategories of $\modA$. 
\end{itemize}
\end{enumerate}
\end{theorem}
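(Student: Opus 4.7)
The plan is to establish each of the three bijections in turn, using the fundamental equivalence between ring epimorphisms $f\colon A\to B$ and fully faithful restriction-of-scalars functors $f_*\colon \ModB\to\ModA$.

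For part (1), I would first construct the maps in both directions. Given a ring epimorphism $f\colon A\to B$, the essential image $\Xcal_B$ of $f_*$ inherits closure under limits and colimits from the adjoints $\Hom_A(B,-)$ and $-\otimes_A B$, giving in particular closure under products, coproducts, kernels, and cokernels. Conversely, starting from a bireflective subcategory $\Xcal$, one uses its reflection functor $\lambda\colon \ModA\to\Xcal$ and sets $B:=\lambda(A)$. The identification $\End_A(B)\cong \Hom_A(A,B)$ via the unit endows $B$ with a natural ring structure, and the unit $\eta_A\colon A\to B$ becomes a ring epimorphism whose essential image of restriction recovers $\Xcal$. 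Verifying that these assignments are mutually inverse is routine.

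For part (2), the main step is to connect closure of $\Xcal_B$ under extensions to the vanishing of $\Tor_1^A(B,B)$. One direction uses the comparison between $\Ext^1_A$ and $\Ext^1_B$ for modules in $\Xcal_B$: the obstruction to restriction inducing an isomorphism $\Ext^1_B(M,N)\cong \Ext^1_A(M,N)$ is exactly controlled by $\Tor_1^A(B,B)$, via an explicit long exact sequence obtained by applying $-\otimes_A B$ to a projective resolution of $B$. The converse applies closure under extensions to the canonical short exact sequence $0\to K\to A^{(I)}\to B\to 0$ arising from a free $A$-presentation of $B$, extracting the desired Tor-vanishing. I expect this homological translation to be the main technical hurdle.

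For part (3), assuming $A$ finite dimensional: if $B$ is finite dimensional then $\Xcal_B\cap\modA$ is closed under kernels and cokernels formed in $\modA$, and by part (2) also under extensions, hence is wide. Functorial finiteness follows because the reflection unit $A\to B$ yields a finite dimensional left $(\Xcal_B\cap\modA)$-approximation of $A$, which extends to all of $\modA$ by writing any finitely presented module as a cokernel between finitely generated projectives. Conversely, given a functorially finite wide subcategory $\Wcal\subseteq\modA$, the existence of a left $\Wcal$-approximation of $A$ produces a finite dimensional reflection, from which one reconstructs a finite dimensional $B$ and a corresponding ring epimorphism with $\Tor_1^A(B,B)=0$. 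The delicate point is verifying that a finitely presented reflection on $\modA$ lifts to a bireflective subcategory of the full module category, which follows from standard approximation and adjunction arguments once functorial finiteness is in place.
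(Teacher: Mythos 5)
The paper does not prove this theorem; it is an assembly of three cited results (Gabriel--de la Pe\~na and Geigle--Lenzing for (1), Schofield for (2), Iyama for (3)), so there is no in-paper argument to compare against. Your sketch of (1) is the standard construction. In (2) you gloss a real step: the change-of-rings comparison bounds the failure of $\Ext^1_B(M,N)\to\Ext^1_A(M,N)$ being an isomorphism by $\Tor_1^A(M,B)$ for arbitrary $M\in\ModB$, not by $\Tor_1^A(B,B)$ itself, so one must separately prove that $\Tor_1^A(B,B)=0$ forces $\Tor_1^A(M,B)=0$ for every $B$-module $M$ (via a base-change spectral sequence or a resolution of $M$ over $B$). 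That reduction is the actual content of Schofield's Theorem~4.8, and your appeal to "an explicit long exact sequence obtained by applying $-\otimes_A B$ to a projective resolution of $B$" does not deliver it.

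The genuine gap is in the converse direction of (3). A left $\Wcal$-approximation $A\to W$ only makes $\Hom_A(W,X)\to\Hom_A(A,X)$ \emph{surjective} for $X\in\Wcal$; a reflection requires this map to be \emph{bijective}. Your claim that "the existence of a left $\Wcal$-approximation of $A$ produces a finite dimensional reflection" is therefore false as stated; the approximation has to be corrected, using wideness of $\Wcal$ in an essential way (e.g.\ passing to a suitable image/cokernel and showing it stays in $\Wcal$, or invoking Iyama's rejective-subcategory machinery). Likewise, passing from a reflection on $\modA$ to a bireflective extension-closed subcategory of $\ModA$ whose intersection with $\modA$ recovers $\Wcal$ is precisely the nontrivial content of \cite[Theorem~1.6.1]{I}, not a "standard approximation and adjunction argument." Those are the two places where your outline, as written, would not compile into a proof.
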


We will be particularly interested in ring epimorphisms that arise from partial silting modules.
Given a set $\Sigma$ of maps in $\ProjA$, we define the subcategory
$$\Xcal_\Sigma:=\{X\in \ModA \,\mid\, \Hom_A(\sigma,X)\ \text{is bijective for all } \sigma\in\Sigma \}.$$
Similar to before, identifying a morphism $\sigma$ with its cone in $\mathsf{D}(A)$, we can write
$$\Xcal_\Sigma=\{X\in \ModA \,\mid\, \Hom_{\mathsf{D}(A)}(\sigma,X[1])=0=\Hom_{\mathsf{D}(A)}(\sigma,X)\text{ for all }\sigma\in\Sigma \}.$$
If $\Sigma$ contains just one element $\sigma$, we write $\Xcal_\sigma$.

If $T_1$ is a partial silting module with respect to a projective presentation $\sigma_1$, then we have the inclusions $\Gen{T_1}\subseteq \Dcal_{\sigma_1}\subseteq T_1\,^{\perp_1}$, and $(\Gen{T_1}, T_1^\circ)$ is a torsion pair.  Let $T$ be the Bongartz completion of $T_1$ with $\Gen{T}=\Dcal_{\sigma_1}$. The subcategory  
$$\Xcal_{\sigma_1}=\Dcal_{\sigma_1}\cap\Coker{\sigma_1}^\circ=\Gen{T}\cap T_1^\circ$$ 
is bireflective and extension-closed by \cite[Proposition 3.3]{AMV2}. Thus,  it can  be realised as $\Xcal_B$ for a ring epimorphism $A\ra B$ as in Theorem~\ref{epicl}(2), and by \cite[Theorem 3.5]{AMV2} the ring $B$ is isomorphic to an idempotent  quotient of  the endomorphism ring of $T$.

\begin{definition} 
A ring epimorphism  arising from a partial silting module as above will be said to be a {\em silting ring epimorphism}.
\end{definition}

The following result provides a large supply of silting ring epimorphisms.

\begin{theorem}  \label{def:universallocalisation}
Let $\Sigma$ be a set of morphisms in $\projA$.
\begin{enumerate}
\item \cite[Theorem~4.1]{Sch} There is a ring homomorphism $f_\Sigma: A\longrightarrow A_\Sigma$, called the
\emph{universal localisation} of $A$ at $\Sigma$, such that
\begin{enumerate}
\item[(i)] $f_\Sigma$ is \emph{$\Sigma$-inverting,} i.e.~$\sigma\otimes_A A_\Sigma$ is an isomorphism  for every
$\sigma$ in  $\Sigma$.
\item[(ii)] $f_\Sigma$ is \emph{universal
$\Sigma$-inverting}, i.e.~every $\Sigma$-inverting ring homomorphism $f: A\longrightarrow B$
factors  uniquely through  $f_\Sigma$.
\end{enumerate}
\item \cite[Theorem 6.7]{MS2} The ring homomorphism $f_\Sigma\colon A\longrightarrow A_\Sigma$ is a silting ring epimorphism.
\end{enumerate} 
\end{theorem}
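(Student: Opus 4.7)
For (1), the plan is to invoke the classical Cohn--Schofield construction: form the free product of $A$ with a polynomial ring on formal matrix entries $\{x^\sigma_{ij}\}$ (one family per $\sigma\in\Sigma$), and quotient by the relations $\sigma\cdot X^\sigma=\mathrm{id}$ and $X^\sigma\cdot\sigma=\mathrm{id}$, where $X^\sigma=(x^\sigma_{ij})$. The resulting ring $A_\Sigma$ comes with a canonical $\Sigma$-inverting map, and any other $\Sigma$-inverting $f\colon A\longrightarrow B$ determines images for the $x^\sigma_{ij}$ (the entries of $f(\sigma)^{-1}$), yielding a unique factorisation through $f_\Sigma$.

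For (2), the strategy is to exhibit a partial silting module whose associated bireflective extension-closed subcategory (via \cite[Proposition~3.3]{AMV2} and Theorem~\ref{epicl}(2)) coincides with $\Xcal_{A_\Sigma}$. First, since $f_\Sigma$ is universally $\Sigma$-inverting, restriction identifies $\ModB$ with $\{X\in\ModA : \Hom_A(\sigma,X) \text{ is bijective for all } \sigma\in\Sigma\}$, that is $\Xcal_{A_\Sigma}=\Xcal_{\sigma_\Sigma}$, where $\sigma_\Sigma:=\bigoplus_{s\in\Sigma}s$ is viewed as a morphism in $\ProjA$. Next, I would use the universal property of $f_\Sigma$ to produce an approximation triangle $A\longrightarrow\tau_0\longrightarrow\tau_1\longrightarrow A[1]$ in $\mathsf{D}(A)$ with $\tau_0,\tau_1\in\Add{\sigma_\Sigma}$, and set $T_0=H^0(\tau_0)$ and $T_1=H^0(\tau_1)$. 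The goal is then to invoke Proposition~\ref{Prop silting}, which simultaneously delivers that $T=T_0\oplus T_1$ is silting with $\Gen{T}=\Dcal_{\sigma_\Sigma}$ and that $T_1$ is partial silting with respect to an appropriate $\sigma_1\in\Add{\sigma_\Sigma}$; the bireflective subcategory $\Xcal_{\sigma_1}=\Gen{T}\cap T_1^\circ$ will then match $\Xcal_{A_\Sigma}$ by construction, certifying $f_\Sigma$ as a silting ring epimorphism.

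The main obstacle I anticipate is verifying the hypothesis of Proposition~\ref{Prop silting}, namely that $T_0$ admits a presilting presentation. The natural candidate is (a summand of) $\sigma_\Sigma$ itself, and the required vanishing $\Hom_{\mathsf{D}(A)}(\sigma_\Sigma,\sigma_\Sigma^{(I)}[1])=0$ for every set $I$ should reduce to showing that arbitrary coproducts of cohomologies of objects in $\Add{\sigma_\Sigma}$ land in $\Xcal_{A_\Sigma}$. This in turn relies on the bireflectiveness and extension-closure of $\Xcal_{A_\Sigma}$ (the latter via $\Tor_1^A(A_\Sigma,A_\Sigma)=0$, which is already part of Schofield's theorem on universal localisation at morphisms in $\projA$) together with the fact that each $\sigma\in\Sigma$ acts invertibly on objects restricted from $A_\Sigma$, which forces the relevant higher Hom-groups in $\mathsf{D}(A)$ to vanish. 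Once this presilting condition is in place, the rest is essentially formal bookkeeping with the approximation triangle.
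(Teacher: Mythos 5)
This theorem is stated in the paper as a pair of citations (part (1) from \cite{Sch}, part (2) from \cite{MS2}); the paper gives no proof, so there is no in-paper argument to compare against. Your sketch for part (1) is the standard Cohn--Schofield construction and is fine as far as it goes.

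For part (2), however, there is a genuine gap. You plan to invoke Proposition~\ref{Prop silting}, but that proposition takes as a \emph{hypothesis} the existence of an $\Add{T_0}$-approximation sequence $A\to T_0\to T_1\to 0$; it does not produce one. You assert that the approximation triangle $A\to\tau_0\to\tau_1\to A[1]$ with $\tau_0,\tau_1\in\Add{\sigma_\Sigma}$ "comes from the universal property of $f_\Sigma$", but the universal property of $f_\Sigma$ is a statement about factorising ring homomorphisms; it does not manufacture a left $\Add{\sigma_\Sigma}$-approximation of $A$ in $\mathsf{D}(A)$. The object $A_\Sigma$, viewed via restriction as an object of $\ModA$ or $\mathsf{D}(A)$, need not lie in $\Add{\sigma_\Sigma}$ or $\Add{T_0}$, so the canonical map $A\to A_\Sigma$ is not a candidate approximation. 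Producing such an approximation is exactly the substantive content that the cited \cite[Theorem~6.7]{MS2} supplies, typically via a Bongartz-type completion; and the Bongartz completion of \cite[Theorem~3.12]{AMV1} requires the input module to be \emph{partial silting} (so in particular $\Dcal_{\sigma_\Sigma}$ must be a torsion class), which is strictly more than the "presilting presentation" condition you propose to verify. Your plan thus risks circularity: you need the approximation to run Proposition~\ref{Prop silting}, and you would need partial silting (not merely presilting) to get the approximation via Bongartz. In addition, the verification that $\sigma_\Sigma$ is a presilting presentation is only sketched — you say it "should reduce to" a statement about coproducts landing in $\Xcal_{A_\Sigma}$ — and the reduction itself, as well as the closure of $\Dcal_{\sigma_\Sigma}$ under quotients needed for the torsion-class condition, are precisely where the difficulty lies when $\Sigma$ is infinite. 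To make this rigorous you would need to (a) prove directly that $\Dcal_{\sigma_\Sigma}$ is a torsion class containing all coproducts of $\mathrm{Coker}(\sigma_\Sigma)$, so that $\mathrm{Coker}(\sigma_\Sigma)$ is genuinely partial silting, (b) then invoke Bongartz completion, and (c) finally identify the resulting silting ring epimorphism with $f_\Sigma$ via the universal property. As written, steps (a) and (b) are not established.
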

Note that $\Xcal_\Sigma$ is precisely the bireflective subcategory corresponding to $f_\Sigma$ under the bijection in Theorem~\ref{epicl}, see \cite[p.~52 and 57]{Sch}. 


\section{From torsion classes to abelian subcategories}
For any torsion class $\Tcal$ in $\ModA$, or more generally, in an abelian category $\mathcal{A}$, we  consider the following subcategory   of $\mathcal{A}$  studied in  \cite{IT} 
\begin{equation}\label{alpha}\mathfrak a(\Tcal):=\{X\in\Tcal: \text{ if } (g:Y\longrightarrow X)\in\Tcal, \text{ then } \Ker{g}\in\Tcal\}.\end{equation}

\begin{proposition}\cite[Proposition 2.12]{IT}\label{abelian} The subcategory $\mathfrak a(\Tcal)$ of $\mathcal{A}$ is closed under kernels, cokernels, and extensions.
\end{proposition}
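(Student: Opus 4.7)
The plan is to verify each closure property directly from the defining condition: an object $X\in\Tcal$ lies in $\mathfrak a(\Tcal)$ precisely when $\Ker{h}\in\Tcal$ for every morphism $h\colon Y\ra X$ with $Y\in\Tcal$. Since $\Tcal$ is a torsion class, hence closed under quotients and extensions, the strategy in each case is to realise the given $\Ker{h}$ as the kernel of an auxiliary morphism whose target is already known to lie in $\mathfrak a(\Tcal)$ and whose source is in $\Tcal$.

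For kernels, let $g\colon X_1\ra X_2$ with $X_1,X_2\in\mathfrak a(\Tcal)$ and $K=\Ker{g}$. Then $K\in\Tcal$ by applying the hypothesis on $X_2$ to $g$ itself. Any $h\colon Y\ra K$ with $Y\in\Tcal$ has the same kernel as its composition with the monomorphism $K\hookrightarrow X_1$, and the hypothesis on $X_1$ forces $\Ker{h}\in\Tcal$. For extensions, let $0\ra X_1\ra X_2\ra X_3\ra 0$ be exact with $X_1,X_3\in\mathfrak a(\Tcal)$; then $X_2\in\Tcal$. Given $g\colon Y\ra X_2$ with $Y\in\Tcal$, the kernel $K'$ of the composition $Y\ra X_2\ra X_3$ lies in $\Tcal$ by the $X_3$-hypothesis, and the induced factorisation $\tilde g\colon K'\ra X_1$ satisfies $\Ker{\tilde g}=\Ker{g}$ because $X_1\hookrightarrow X_2$ is monic, so $\Ker{g}\in\Tcal$ by the $X_1$-hypothesis.

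The main technical step is cokernels. Given $g\colon X_1\ra X_2$ with $X_1,X_2\in\mathfrak a(\Tcal)$, set $C=\Coker{g}$; as a quotient of $X_2$, we have $C\in\Tcal$. For $h\colon Y\ra C$ with $Y\in\Tcal$, I would form the pullback
\[
\xymatrix{P\ar[r]\ar[d] & Y\ar[d]^{h}\\ X_2\ar[r] & C.}
\]
Since $X_2\ra C$ is epi, so is $P\ra Y$, with kernel $\Img{g}$. The resulting short exact sequence $0\ra\Img{g}\ra P\ra Y\ra 0$ has both outer terms in $\Tcal$ (the kernel is a quotient of $X_1$), so $P\in\Tcal$ by extension-closure. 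The same pullback identifies $\Ker{h}$ with the kernel of $P\ra X_2$, and applying the $\mathfrak a(\Tcal)$-hypothesis on $X_2$ to this map yields $\Ker{h}\in\Tcal$.

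The main obstacle is precisely this cokernel case: unlike for kernels and extensions, the morphism $h$ does not land in $X_1$ or $X_2$, so the defining property of $\mathfrak a(\Tcal)$ cannot be invoked on it directly. The pullback is the natural device to replace $Y$ by a $\Tcal$-object mapping into $X_2$ with the same kernel as $h$; once this substitution is justified, the rest of the argument is formal and the three closure properties fall out uniformly.
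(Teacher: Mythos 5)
Your verification is correct and self-contained. Note that the paper itself does not prove this statement: it is quoted directly from Ingalls--Thomas \cite[Proposition~2.12]{IT}, so there is no in-paper argument to compare against. Your proof is the natural direct check of the definition of $\mathfrak a(\Tcal)$: for kernels you reduce a test map $Y\ra K$ to the composite $Y\ra K\hookrightarrow X_1$; for extensions you pass a test map $Y\ra X_2$ first through the $X_3$-hypothesis to get $K'\in\Tcal$ and then to $X_1$; and for cokernels --- the only non-obvious case, since a test map $Y\ra C$ does not factor through $X_1$ or $X_2$ --- you use the pullback along $X_2\twoheadrightarrow C$ to produce a $\Tcal$-object $P$ mapping to $X_2$ with the same kernel, with $P\in\Tcal$ following from the short exact sequence $0\ra\Img g\ra P\ra Y\ra 0$ and extension-closure of $\Tcal$. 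All the steps hold in a general abelian category (pullbacks of epimorphisms are epimorphisms, kernels are unaffected by post-composition with monomorphisms), so the argument is sound.
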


In general, however, $\mathfrak a(\Tcal)$ is not a bireflective subcategory, not even in case $\Tcal$ is a tilting class, as the following example shows.

\begin{example}  
Let $A$ be the Kronecker algebra. It is well-known that the class $\mathcal T$ of all $A$-modules without indecomposable preprojective summands is a tilting class, that is, $\Tcal=\Gen{L}$ for a tilting module  $L$, called the Lukas tilting module. We will show that $\mathfrak{a}(\Tcal)$ is not closed under coproducts.

We first claim that  $\mathfrak{a}(\Tcal)$ contains all regular modules. By Proposition~\ref{abelian}, it is enough to check that any  simple regular module $S$ lies in $\mathfrak{a}(\Tcal)$.  Moreover, since $A$ is hereditary, we know from \cite[Proposition 2.15]{IT} that 
$$\mathfrak a(\Tcal)=\{X\in\Tcal: \text{ if } (g:Y\longtwoheadrightarrow X)\in\Tcal, \text{ then } \Ker{g}\in\Tcal\}.$$ 
Certainly, $S\in \Tcal=\Gen{L}$. Assume that there is a surjection $g: X\ra  S$  with  $X$ in $\Gen{L}$ and $K:=\Ker{g}$ not lying in $\Gen{L}$. Since $\Gen{L}$ is the class of all modules without indecomposable preprojective  summands,  $K$ must have an indecomposable preprojective summand $P$. Consider the pushout diagram
 \[\xymatrix{0\ar[r] &K \ar[d]^{\pi}\ar[r]^{}&X\ar[d]^{\pi'}\ar[r]^{g}&S\ar@{=}[d]\ar[r]{}&0\\0\ar[r] &P \ar[r]^{}&X'\ar[r]^{g'}&S\ar[r]{}&0}\]
with $\pi$ the canonical projection onto $P$. The module $X'$ is finite dimensional. It has no non-zero preprojective summands because $\pi'$ is surjective and thus $X^\prime$ in $\Gen{L}$. Further, it has no non-zero preinjective summands, because they would have to lie in $\Ker {g'}=P$. But then $X'$ is regular, and $\Ker {g'}$ must be regular as well, a contradiction.

Now assume that $\mathfrak{a}(\Gen{L})$ is closed under coproducts. Then  all direct limits of regular modules, so in particular all Pr\"ufer modules, would belong to  $\mathfrak{a}(\Gen{L})$. But this is not possible: for any Pr\"ufer module $S_\infty$ there is a short exact sequence 
$$0\longrightarrow A\longrightarrow A_S\longrightarrow S_\infty\,^{(2)}\longrightarrow 0$$ 
where $A_S$ is the universal localisation of $A$ at the minimal projective presentation of $S$, and $A_S\in \Gen{L}$ since the category of $A_S$-modules identifies with the subcategory $S^\circ\cap S^{\perp_1}$ of $\ModA$, which cannot contain indecomposable preprojective modules.
For details on  universal localisations and  tilting modules over the Kronecker algebra we refer to \cite{AS}.
\end{example}

On the other hand, if $\Tcal$ is a silting class generated by a minimal silting module, then $\mathfrak a(\Tcal)$ is bireflective. Indeed, the following proposition expresses the abelian category $\mathfrak a(\Gen{T})$ as a subcategory that is also closed under products and coproducts.

\begin{proposition}\cite[Remark 5.7]{AMV2}\label{equals a} 
Let $T$ be a minimal silting module with associated partial silting module $T_1$. Then we have that $$\mathfrak a(\Gen{T})=\Gen{T}\cap T_1^\circ.$$ 
\end{proposition}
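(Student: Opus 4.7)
The plan is to establish the two inclusions separately. The inclusion $\Gen{T}\cap T_1^\circ\subseteq\mathfrak a(\Gen{T})$ should follow by a diagram chase in $\mathsf{D}(A)$, while the reverse inclusion is where the minimality of $\phi\colon A\longrightarrow\sigma_0$ becomes essential.

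First, for $\Gen{T}\cap T_1^\circ\subseteq\mathfrak a(\Gen{T})$, I would take $X\in\Gen{T}\cap T_1^\circ$ and any morphism $g\colon Y\to X$ with $Y\in\Gen{T}$, and verify that $\Ker{g}\in\Gen{T}$. The key observation is that $I:=\Img{g}$ embeds into $X\in T_1^\circ$, and since $T_1^\circ$ is closed under submodules, also $\Hom_A(T_1,I)=0$. Applying $\Hom_{\mathsf{D}(A)}(\sigma_1,-)$ to the triangle induced by the short exact sequence $0\to\Ker{g}\to Y\to I\to 0$, and using the identification $\Hom_{\mathsf{D}(A)}(\sigma_1,M)\cong\Hom_A(T_1,M)$ for any $A$-module $M$ together with $\Hom_{\mathsf{D}(A)}(\sigma_1,Y[1])=0$ (since $Y\in\Gen{T}=\Dcal_{\sigma_1}$), the long exact sequence forces $\Hom_{\mathsf{D}(A)}(\sigma_1,(\Ker{g})[1])=0$, i.e.\ $\Ker{g}\in\Dcal_{\sigma_1}=\Gen{T}$.

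For the converse $\mathfrak a(\Gen{T})\subseteq\Gen{T}\cap T_1^\circ$, let $X\in\mathfrak a(\Gen{T})$; since $X\in\Gen{T}$ is automatic, I would need to show $\Hom_A(T_1,X)=0$. Applying $\Hom_{\mathsf{D}(A)}(-,X)$ to the triangle $A\overset{\phi}{\longrightarrow}\sigma_0\longrightarrow\sigma_1\longrightarrow A[1]$ yields an exact sequence $0\to\Hom_A(T_1,X)\to\Hom_A(T_0,X)\overset{f^*}{\longrightarrow}X$, reducing the claim to injectivity of $f^*$: any $h\colon T_0\to X$ with $hf=0$ factors uniquely as $h=g\pi$ for some $g\colon T_1\to X$, and the task is to force $g=0$. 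The hypothesis $X\in\mathfrak a(\Gen{T})$ gives $\Ker{g}\in\Gen{T}=\Dcal_{\sigma_1}$, and the strategy is to combine this datum with the left-minimality of $\phi$ in $\mathsf{D}(A)$: a nonzero $g$ would lift through the triangle to an endomorphism $\tilde h\colon\sigma_0\to\sigma_0$ satisfying $\tilde h\circ\phi=\phi$ but failing to be an isomorphism, contradicting left-minimality.

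The hard part will be the construction of this non-invertible $\tilde h$ in the second paragraph. I expect to exploit the canonical morphism $\sigma_1\to T_1$ in $\mathsf{D}(A)$ composed with $g$, completing it through the triangle $A\to\sigma_0\to\sigma_1\to A[1]$ via the lifting axiom of the triangulated category; the vanishing of $\Hom_{\mathsf{D}(A)}(\sigma_1,(\Ker{g})[1])$ coming from $\Ker{g}\in\Dcal_{\sigma_1}$ should supply the compatibility needed to produce $\tilde h$, and the non-invertibility of the resulting $\tilde h$ should trace back to the nonzero image $\Img{g}\neq 0$ inside $X$. All of the structural ingredients (the triangle, the left-minimality of $\phi$, and the characterisation $\Gen{T}=\Dcal_{\sigma_1}$) are already available in the excerpt, so the only real difficulty lies in organising this final minimality argument cleanly.
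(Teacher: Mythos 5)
Your first inclusion $\Gen{T}\cap T_1^\circ\subseteq\mathfrak a(\Gen{T})$ is correct: passing to the image $I=\Img{g}$, using that $T_1^\circ$ is closed under submodules, the identification $\Hom_{\mathsf{D}(A)}(\sigma_1,-)\cong\Hom_A(T_1,-)$ on modules, and the long exact sequence of $\Hom_{\mathsf{D}(A)}(\sigma_1,-)$ on the triangle $\Ker{g}\to Y\to I\to\Ker{g}[1]$ yields $\Ker{g}\in\Dcal_{\sigma_1}=\Gen{T}$ exactly as you describe.

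The second inclusion, however, is not proved. You correctly reduce the problem to showing that any $g\colon T_1\to X$ must vanish, and you correctly identify minimality as the lever, but the actual construction of the non-invertible endomorphism is deferred with phrases such as ``should supply the compatibility needed'' and ``should trace back to the nonzero image''. That construction \emph{is} the content of this direction, and your sketch via the lifting axiom in $\mathsf{D}(A)$ does not explain how the datum $\Ker{g}\in\Dcal_{\sigma_1}$ produces the desired map $\tilde h$, nor why it fails to be invertible. The missing argument is in fact cleaner in $\ModA$: applying the definition of $\mathfrak a(\Gen{T})$ to the composite $g\pi\colon T_0\to X$ (with $T_0\in\Gen{T}$) gives $K:=\Ker{(g\pi)}\in\Gen{T}$; since $g\pi f=0$ one has $\Img{f}\subseteq K$, so $f$ factors as $A\overset{f'}{\to}K\overset{\iota}{\hookrightarrow}T_0$. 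As recorded in the proof of Proposition \ref{Prop silting}, membership in $\Gen{T}$ is equivalent to surjectivity of $\Hom_A(f,-)$, so $K\in\Gen{T}$ allows one to lift $f'$ to some $h\colon T_0\to K$ with $hf=f'$, whence $(\iota h)f=f$. Left-minimality of $f$ (induced by left-minimality of $\phi$, as the paper notes after Corollary \ref{perfect}) forces $\iota h$ to be an isomorphism, so $\iota$ is onto, $K=T_0$, $g\pi=0$, and finally $g=0$ because $\pi$ is epi. Lifting $\iota h$ to an endomorphism $\tilde h$ of $\sigma_0$ with $\tilde h\phi=\phi$ is possible if you insist on arguing in $\mathsf{D}(A)$, but it adds nothing; in any case, without the factorisation through $K=\Ker{(g\pi)}$ your argument has a genuine gap.
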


To summarise, given a minimal silting module $T$,  we can associate to it a canonical partial silting module and, therefore, a silting ring epimorphism, and it turns out that the corresponding bireflective subcategory is precisely  $\mathfrak a(\Gen{T})$. The following  corollary collects these findings.

\begin{corollary}\label{inj1}
There is a commutative diagram as follows.
$$\xymatrix{{\left\{\begin{array}{c}\text{\Small Equivalence classes}\\ \text{\Small of minimal}\\ \text{\Small silting $A$-modules} \end{array}\right\}}\ar[rr]^{\alpha}\ar[dr]^{\mathfrak{a}\,\,\,} &  & {\left\{\begin{array}{c}\text{\Small Epiclasses of ring} \\ \text{\Small epimorphisms $A\to B$}\\ \text{\Small with $\Tor_1^A(B,B)=0$} \end{array}\right\}}\ar[dl]_{\,\,\,\epsilon}\\ & {\left\{\begin{array}{c}\text{\Small  Bireflective }\\ \text{\Small extension-closed} \\ \text{\Small subcategories}\\ \text{\Small of $\ModA$}\end{array}\right\}} &}$$
The map $\alpha$ assigns to the minimal silting module $T$ the associated silting ring epimorphism. The map 
$\epsilon$ is the bijection from Theorem \ref{epicl}(1) and
$\mathfrak{a}$  is  the map defined in (\ref{alpha}) for the abelian category  $\ModA$.
\end{corollary}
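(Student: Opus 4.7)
The plan is to verify commutativity of the diagram by tracing the construction of $\alpha(T)$ for a minimal silting module $T$ and matching the result with $\mathfrak{a}(\Gen{T})$; well-definedness of $\alpha$ on equivalence classes will then follow automatically from well-definedness of $\mathfrak{a}$ together with bijectivity of $\epsilon$.

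First I would unpack $\alpha(T)$. Given a minimal silting module $T$ with projective presentation $\sigma$, fix a left-minimal $\Add{\sigma}$-approximation triangle $A\to\sigma_0\to\sigma_1\to A[1]$ as in (\ref{approx tria}). Left-minimality determines this triangle uniquely up to isomorphism, so the canonical partial silting module $T_1:=H^0(\sigma_1)$ with its projective presentation $\sigma_1$ is a well-defined invariant of $T$. Following the discussion preceding the definition of silting ring epimorphism, $T_1$ produces the subcategory $\Xcal_{\sigma_1}=\Gen{T}\cap T_1^\circ$, which is bireflective and extension-closed by \cite[Proposition 3.3]{AMV2}; Theorem \ref{epicl}(2) then converts this into an epiclass of ring epimorphisms $A\to B$ with $\Tor_1^A(B,B)=0$. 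This epiclass is precisely $\alpha(T)$, so invoking the explicit form of the bijection $\epsilon$ from Theorem \ref{epicl}(1) yields $\epsilon(\alpha(T))=\Xcal_{\sigma_1}=\Gen{T}\cap T_1^\circ$.

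The second step is a direct appeal to Proposition \ref{equals a}, which identifies $\mathfrak{a}(\Gen{T})$ with the same subcategory $\Gen{T}\cap T_1^\circ$. Combining the two equalities gives $\epsilon(\alpha(T))=\mathfrak{a}(\Gen{T})$, the required commutativity. Well-definedness on equivalence classes is then automatic: $\mathfrak{a}$ only depends on the torsion class $\Gen{T}$, which is an invariant of the equivalence class of $T$, and well-definedness of $\alpha$ follows from the commutativity together with the bijectivity of $\epsilon$.

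I do not anticipate a genuine obstacle: the substantive content already sits in Proposition \ref{equals a}, where minimality of $T$ is essential both for producing the canonical partial silting module $T_1$ and for pinning down $\mathfrak{a}(\Gen{T})$, and in the construction of the silting ring epimorphism attached to a partial silting module. The corollary amounts to gluing these ingredients together; the only delicate point is to check that the output of $\alpha$ really depends only on the equivalence class of $T$, which is secured by the reduction to well-definedness of $\mathfrak{a}$ via commutativity.
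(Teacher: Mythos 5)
Your proposal is correct and takes essentially the same route as the paper: the paper offers no formal proof of this corollary but derives it from the paragraph immediately preceding it, which is precisely the chain you trace — pass from the minimal silting module $T$ to the canonical partial silting module $T_1$ via the left-minimal approximation triangle, identify the associated bireflective subcategory as $\Xcal_{\sigma_1}=\Gen{T}\cap T_1^\circ$ using the discussion after Theorem~\ref{def:universallocalisation}, and then invoke Proposition~\ref{equals a} to see this equals $\mathfrak{a}(\Gen{T})$. Your observation that well-definedness of $\alpha$ on equivalence classes is automatic from commutativity plus bijectivity of $\epsilon$ is likewise implicit in the paper.
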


Over a finite dimensional algebra, the triangle above restricts to the level of finite dimensional modules.

\begin{theorem}\cite{MS1}\label{inj} If $A$ is a finite dimensional algebra, there is a commutative diagram of injections as follows.
$$\xymatrix{{\left\{\begin{array}{c}\text{\Small Equivalence classes} \\ \text{\Small of finite dimensional}\\ \text{\Small silting $A$-modules} \end{array}\right\}}\ar[rr]^{\alpha}\ar[dr]^{\mathfrak a} &  & {\left\{\begin{array}{c}\text{\Small Epiclasses of ring } \\ \text{\Small  epimorphisms $A\to B$ }\\ \text{\Small with $\Tor_1^A(B,B)=0$,} \\ \text{\Small $B$ finite dimensional} \end{array}\right\}}\ar[dl]_{\epsilon}\\ & {\left\{\begin{array}{c}\text{\Small  Functorially }\\ \text{\Small finite wide } \\ \text{\Small subcategories }\\ \text{\Small of $\modA$}
\end{array}\right\}} &}$$
Here, the map $\alpha$ is as above, $\epsilon$ is the bijection from Theorem \ref{epicl}(3), and $\mathfrak{a}$  is  the map defined in (\ref{alpha}) for the abelian category $\modA$. Moreover, ring epimorphisms in the image of the assignment $\alpha$ are universal localisations.
\end{theorem}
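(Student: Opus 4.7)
The plan is to derive this statement as the finite-dimensional restriction of the general commutative triangle from Corollary~\ref{inj1}, combined with the bijection in Theorem~\ref{epicl}(3). First, I would verify that $\alpha$ is well-defined with image in the displayed target set. Over a finite dimensional algebra every finite dimensional module admits a minimal projective presentation, so by Corollary~\ref{perfect} every finite dimensional silting module $T$ is automatically minimal, and its canonical partial silting companion $T_1$ is itself finite dimensional, since it fits into an approximation sequence of the form (\ref{approx seq}) with $T_0\in\add{T}$. The ring $B$ associated to $T$ by $\alpha$ is an idempotent quotient of $\End_A(T)$ (by the discussion after Theorem~\ref{epicl}), hence finite dimensional, and satisfies $\Tor_1^A(B,B)=0$ by construction.

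Commutativity of the triangle is essentially contained in Corollary~\ref{inj1}. Proposition~\ref{equals a} gives $\mathfrak{a}(\Gen{T})=\Gen{T}\cap T_1^\circ=\Xcal_{\sigma_1}=\Xcal_B$; intersecting with $\modA$ matches the value of $\mathfrak{a}$ computed inside the abelian category $\modA$ (here one uses that $T_1$ is finite dimensional, so the defining maps $g\colon Y\to X$ in (\ref{alpha}) may be tested with $Y\in\modA$), and Theorem~\ref{epicl}(3) identifies $\epsilon$ with the assignment $B\mapsto\Xcal_B\cap\modA$. Thus $\epsilon\circ\alpha=\mathfrak{a}$. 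Because $\epsilon$ is bijective, the injectivity of $\alpha$ and the injectivity of $\mathfrak{a}$ are equivalent.

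The hard step is this common injectivity, which is the main obstacle. I would argue as follows: a finite dimensional silting module over $A$ is the same as a support $\tau$-tilting module in the sense of \cite{AIR}, and these are in bijection with functorially finite torsion classes in $\modA$ via $T\mapsto \Gen{T}\cap\modA$. Composing with the Ingalls--Thomas style bijection between functorially finite torsion classes and functorially finite wide subcategories of $\modA$ (realised by $\mathfrak{a}$), the equivalence class of $T$ can be recovered from $\mathfrak{a}(\Gen{T}\cap\modA)$. This is the content of \cite{MS1}; it is the delicate step because the passage from a wide subcategory back to a torsion class requires non-trivial $\tau$-tilting machinery.

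Finally, to see that the ring epimorphism $\alpha(T)$ is a universal localisation, I would note that $\sigma_1$, being the minimal projective presentation of the finite dimensional module $T_1$, lies in $\projA$. Theorem~\ref{def:universallocalisation}(2) then produces a silting ring epimorphism $f_{\sigma_1}\colon A\to A_{\sigma_1}$ whose associated bireflective subcategory is precisely $\Xcal_{\sigma_1}=\Xcal_B$; by the bijection in Theorem~\ref{epicl}(2), $\alpha(T)$ and $f_{\sigma_1}$ lie in the same epiclass, so every element of the image of $\alpha$ is represented by a universal localisation at a set of maps in $\projA$.
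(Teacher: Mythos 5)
Your proposal follows essentially the same route as the paper's proof: observe that a finite dimensional silting module is minimal with a finite dimensional partial silting companion $T_1$, deduce finite dimensionality of $B$, identify the bireflective subcategory with $\gen{T}\cap T_1^\circ=\Xcal_{\sigma_1}\cap\modA$ for commutativity, and delegate injectivity to \cite{MS1}. Two small imprecisions worth noting: Corollary~\ref{perfect} establishes that $T_1$ is partial silting, not that $T$ is minimal (minimality of finite dimensional silting modules comes from the remark following the definition, using that a finite dimensional algebra is perfect); and the step ``$\mathfrak{a}(\Gen{T})\cap\modA=\mathfrak{a}_{\modA}(\gen{T})$'' is not really a matter of testing maps $g\colon Y\to X$ with $Y\in\modA$ --- it rests on the identity $\mathfrak{a}_{\modA}(\gen{T})=\gen{T}\cap T_1^\circ$ from \cite[Lemma 3.8]{MS1}, which the paper cites explicitly at this point.
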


\begin{proof} 
Observe that every finite dimensional silting module $T$ is minimal. If $T_1$ is the associated partial silting module (with respect to a projective presentation $\sigma_1$), then it follows from the very construction that the associated silting ring epimorphism is the universal localisation of $A$ at $\{\sigma_1\}$. By \cite[Theorem 3.5]{AMV2}, the algebra $A_{\{{\sigma_1}\}}$ is finite dimensional. In particular, $\alpha$ is well-defined. Moreover, by \cite[Lemma 3.8]{MS1}, the functorially finite torsion class $\gen{T}$  gives rise to  the functorially finite wide subcategory $\mathfrak a(\gen{T})=\gen{T}\cap T_1^\circ=\Xcal_{\sigma_1}\cap\modA$. Therefore, the above diagram commutes. Finally, $\mathfrak a$  is injective by \cite[Proposition 3.9]{MS1}.
\end{proof}

It was shown in \cite{Asai} that the maps $\alpha$ and $\mathfrak{a}$ in Theorem \ref{inj} are  surjective only once  we add further assumptions. We will pass to such a situation in the last section of this note.


\section{From abelian subcategories to torsion classes}

The injectivity of the map $\mathfrak{a}$ in Theorem \ref{inj} is proved in \cite{MS1} by providing 
a left inverse map. Our next aim is to find an analogous map  for large modules.
Proceeding as in  \cite{MS1}, we will try to recover a given torsion class $\Tcal$ from $\mathfrak{a}(\Tcal)$ by considering the class of modules admitting filtrations by modules in $\Gen{\mathfrak{a}(\Tcal)}$.

\begin{definition} Given a module $M$ and an ordinal number $\mu$, we call an ascending chain $\mathcal M = (M_\lambda, \lambda \leq \mu)$ of submodules of $M$ a \emph{filtration} of $M$, if $M_0 = 0$, $M_\mu = M$, and $M_\nu=\bigcup _{\lambda < \nu} M_\lambda (=\varinjlim_{\lambda < \nu}M_\lambda)$ for each limit ordinal $\nu\leq\mu$. Moreover, given a class of modules $\mathcal C$, we call $\mathcal M$ an \emph{$\mathcal C$-filtration} of $M$, provided that each of the consecutive factors $M_{\lambda+1}/M_\lambda$ ($\lambda<\mu$) is isomorphic to a module from $\mathcal C$. A module $M$ admitting an  $\mathcal C$-filtration is said to be \emph{$\mathcal C$-filtered}, and the class of all $\mathcal C$-filtered modules is denoted by $\Filt{\Ccal}$.
\end{definition}

\begin{lemma}\label{torsion}
Let $\Ccal$ be a full subcategory of $\ModA$ closed under coproducts and quotients. Then $\Filt{\Ccal}$ is a torsion class in $\ModA$.
\end{lemma}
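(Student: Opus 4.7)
My plan is to verify that $\Filt{\Ccal}$ satisfies the three defining closure properties of a torsion class in $\ModA$: closure under extensions, under quotients, and under arbitrary coproducts. The strategy in each case is to build the desired filtration by a standard concatenation/pushforward procedure and then to check that the consecutive factors still land in $\Ccal$.

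For closure under extensions, given $0\to M'\to M\to M''\to 0$ with $M',M''\in\Filt{\Ccal}$, I would prepend a $\Ccal$-filtration of $M'$ to the preimages in $M$ of a $\Ccal$-filtration of $M''$. Continuity at limit stages is automatic, the factors in the first block are those of $M'$, and the factors in the second block are identified with those of $M''$ by the third isomorphism theorem. For closure under coproducts $\bigoplus_{i\in I}M_i$ with each $M_i\in\Filt{\Ccal}$, I would well-order $I$, assemble the ascending chain of partial sums $\bigoplus_{j<i}M_j$, and within each successor step $\bigoplus_{j<i}M_j \subset \bigoplus_{j\leq i}M_j$ insert the $\Ccal$-filtration of $M_i$. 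At a limit index of $I$ the partial sum is precisely the directed union of earlier ones, so continuity of the combined filtration is automatic.

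The only step where the hypotheses on $\Ccal$ really enter is closure under quotients, and this I see as the main (albeit mild) point of the argument. Given $M\in\Filt{\Ccal}$ with filtration $(M_\lambda)_{\lambda\leq\mu}$ and a surjection $f\colon M\twoheadrightarrow N$, I would form the image filtration $N_\lambda:=f(M_\lambda)$. Continuity at limit ordinals is immediate because $f$ commutes with directed unions of submodules, so $(N_\lambda)_{\lambda\leq\mu}$ is a filtration of $N$. Each consecutive factor $N_{\lambda+1}/N_\lambda$ is a quotient of $M_{\lambda+1}/M_\lambda\in\Ccal$, and here the assumption that $\Ccal$ is closed under quotients is exactly what allows us to conclude $N_{\lambda+1}/N_\lambda\in\Ccal$.

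I do not anticipate any serious obstacle: the whole argument is a transfinite bookkeeping exercise, and the only subtle point is the verification of continuity of the image filtration at limit stages, which reduces to the standard fact that images of submodules commute with directed unions. The coproduct-closure hypothesis on $\Ccal$ is not strictly needed for the three checks above, but it matches the standing set-up in which $\Ccal$ will later be specialised to classes of the form $\Gen{\mathfrak{a}(\Tcal)}$, for which both closure properties hold automatically.
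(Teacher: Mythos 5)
Your proof is correct, and for two of the three closure properties it coincides with the paper's argument: the image filtration for quotients and the concatenation of a filtration of $M'$ with the pulled-back filtration of $M''$ for extensions are exactly what the paper does. Where you genuinely diverge is the coproduct step. The paper forms, for each ordinal $\lambda$, the coproduct $\bigoplus_i M_{i,\lambda}$ of the $\lambda$-th terms of the given filtrations (padding shorter filtrations by repetition), obtaining a single filtration of $\bigoplus_i M_i$ whose consecutive factors are coproducts of factors of the original filtrations; this is precisely where the paper invokes the hypothesis that $\Ccal$ is closed under coproducts. Your transfinite concatenation over a well-ordering of the index set keeps the consecutive factors unchanged, so coproduct-closure of $\Ccal$ is not used at all, and you rightly note this. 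In exchange, your version needs a nested transfinite induction (an outer one over $I$ and an inner one within each block), whereas the paper's ``diagonal'' filtration is a single one indexed by $\sup_i\mu_i$. Both arguments are sound; the paper's is shorter because it is willing to spend the coproduct hypothesis on $\Ccal$ (which is available anyway, since the lemma is applied to $\Ccal = \Gen{\mathfrak a(\Tcal)}$), while yours establishes the slightly stronger fact that $\Filt{\Ccal}$ is closed under coproducts for any class $\Ccal$.
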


\begin{proof}
We verify that $\Filt{\Ccal}$ is closed under coproducts, extensions and quotients. The closure under coproducts and extensions is easy. Indeed, since coproducts are exact in $\ModA$ and $\Ccal$ is coproduct-closed, the coproduct of filtrations with factors in $\Ccal$ is still a $\Ccal$-filtration. On the other hand, given a short exact sequence of the form 
$$0\longrightarrow X\longrightarrow Y\longrightarrow Z\longrightarrow 0$$
with $X$ and $Z$ in $\Filt{\Ccal}$, with filtrations given by, respectively, $(X_\lambda,\lambda \leq  \mu)$ and $(Z_\omega,\omega\leq \nu)$ for some ordinals $\mu$ and $\nu$. For each $Z_\omega$, we can choose a subobject $Y_\omega$ of $Y$ such that $Y_\omega/X\cong Z_\omega$.
We claim that $(W_\theta)_{\theta\leq \mu+\nu}$ given by $W_\theta=X_\theta$ whenever $\theta\leq \mu$ and $W_\theta=Y_\theta$ whenever $\mu<\theta\leq \mu + \nu$ is a $\Ccal$-filtration of $Y$. It suffices to observe that $W_{ \mu+1}/W_\mu=W_{ \mu+1}/X=Y_1/X\cong Z_1$ which lies in $\Ccal$.

It remains to show that $\Filt{\Ccal}$ is  closed under quotients. Given a $\Ccal$-filtration $(X_\lambda:\lambda\leq \mu)$ of $X$ and a surjection $f\colon X\longrightarrow Y$,  we can consider the restrictions $f_\lambda\colon X_\lambda\longrightarrow Y$ for each $\lambda\leq \mu$. Clearly, $(\Img{f_\lambda}:\lambda\leq \mu)$ is a filtration of $Y$ since, for any limit ordinal $\nu\le\mu$, we have 
$$\bigcup\limits_{\lambda<\nu}\Img{f_\lambda}=f(\bigcup\limits_{\lambda<\nu}X_\lambda)=f(X_\nu)=\Img{f_\nu}.$$
We show that indeed $(\Img{f_\lambda}:\lambda\leq \mu)$ is a $\Ccal$-filtration of $Y$. In fact, for each $\lambda<\mu$, we have the following diagram.
$$\xymatrix{0\ar[r]&X_\lambda\ar[r]\ar[d]^{\tilde{f}_\lambda}&  X_{\lambda +1}\ar[r]\ar[d]^{\tilde{f}_\lambda}& X_{\lambda+1}/X_\lambda\ar[r]\ar[d]^g&0\\0\ar[r]&\Img{f_\lambda}\ar[r]& \Img{f_{\lambda+1}}\ar[r]& \Img{f_{\lambda+1}}/\Img{f_\lambda}\ar[r]&0}$$
Since all vertical maps are surjective and $\Ccal$ is closed under quotients, it follows that $ \Img{f_{\lambda+1}}/\Img{f_\lambda}$ lies in $\Ccal$, as wanted.
\end{proof}

If $\Ccal$ is a class of modules, we use the abbreviation $\FiltGen{\Ccal}=\Filt{\Gen{\Ccal}}$. By the lemma above, this  is the smallest torsion class in $\ModA$ containing $\Ccal$.

\begin{lemma}\label{sub}
Let $\Ccal$ be a full subcategory of $\ModA$ closed under kernels, cokernels, coproducts and extensions. Then $\Ccal$ is closed under subobjects in $\FiltGen{\Ccal}$.
\end{lemma}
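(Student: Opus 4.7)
The plan is to prove the statement by transfinite induction on a $\Gen{\Ccal}$-filtration of the submodule $X$, showing that every module in the filtration belongs to $\Ccal$. Concretely, given $Y\in\Ccal$ and $X\subseteq Y$ with $X\in\FiltGen{\Ccal}$, fix a $\Gen{\Ccal}$-filtration $(X_\lambda)_{\lambda\leq\mu}$ of $X$ and establish $X_\lambda\in\Ccal$ for every $\lambda\leq\mu$; the case $\lambda=\mu$ then yields $X\in\Ccal$.

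The driving observation I would extract first is that any wide subcategory $\Ccal$ closed under coproducts is automatically closed under images of its morphisms: for $g\colon C\longrightarrow C'$ with $C,C'\in\Ccal$, one has $\Img{g}=\Ker{(C'\longrightarrow\Coker{g})}\in\Ccal$ by kernel- and cokernel-closure. I intend to combine this with the ambient $Y\in\Ccal$ to exhibit each $X_\lambda$ (or the corresponding successor factor) as the image of a morphism between two objects of $\Ccal$.

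In the successor step, assuming $X_\lambda\in\Ccal$, the inclusion $X_\lambda\hookrightarrow Y$ is a morphism in $\Ccal$, so cokernel-closure produces $Y/X_\lambda\in\Ccal$. The factor $X_{\lambda+1}/X_\lambda$ lies in $\Gen{\Ccal}$ and embeds into $Y/X_\lambda$; picking a surjection $C\longtwoheadrightarrow X_{\lambda+1}/X_\lambda$ with $C\in\Ccal$ (coproduct-closure allows a single $\Ccal$-object to serve as generator) and composing with the embedding displays $X_{\lambda+1}/X_\lambda$ as the image of a morphism $C\longrightarrow Y/X_\lambda$ in $\Ccal$, hence $X_{\lambda+1}/X_\lambda\in\Ccal$. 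Extension-closure applied to $0\longrightarrow X_\lambda\longrightarrow X_{\lambda+1}\longrightarrow X_{\lambda+1}/X_\lambda\longrightarrow 0$ then delivers $X_{\lambda+1}\in\Ccal$. For a limit ordinal $\nu$, the union $X_\nu=\bigcup_{\lambda<\nu}X_\lambda\subseteq Y$ is the image of the canonical map $\bigoplus_{\lambda<\nu}X_\lambda\longrightarrow Y$ induced by the inclusions, again a morphism between two objects of $\Ccal$, so the same observation gives $X_\nu\in\Ccal$.

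The expected main obstacle, more a subtlety than a difficulty, is the essential role of the ambient module $Y$: a $\Gen{\Ccal}$-filtered module need not lie in $\Ccal$ on its own, and only the containment $X\subseteq Y\in\Ccal$ permits each filtration factor and each direct union to be rewritten as the image of a morphism controllable by the wideness and coproduct-closure of $\Ccal$. Once this conversion is arranged uniformly at successor and limit steps, the transfinite induction proceeds without further work.
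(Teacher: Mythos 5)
Your proposal is correct and follows essentially the same route as the paper's proof: the image-closure of $\Ccal$ (which the paper invokes implicitly) drives the argument, and the successor step is identical. The only cosmetic difference is at limit ordinals, where the paper notes that coproduct- and cokernel-closure give closure under directed colimits and concludes $X_\nu=\varinjlim X_\lambda\in\Ccal$ without reference to the ambient $Y$, while you instead realise $X_\nu$ as the image of $\bigoplus_{\lambda<\nu}X_\lambda\longrightarrow Y$; both are valid.
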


\begin{proof}
Let $C$ be in $\Ccal$ and $f:Y\longrightarrow C$ be an injection with $Y$ in $\FiltGen{\Ccal}$. 
Suppose that $Y$ lies in $\Gen{\Ccal}$. Since $\Ccal$ is closed under coproducts, there is a surjection $g:W\longrightarrow Y$ for some $W$ in $\Ccal$. Hence, $Y$ is isomorphic to the image of the map $fg\colon W\longrightarrow C$. Since $\Ccal$ is closed under kernels and cokernels, $Y$ lies in $\Ccal$. 

Now we consider a $\Gen{\Ccal}$-filtration $(Y_\lambda:\lambda\leq \mu)$ of $Y$ and we proceed by transfinite induction. 
Let $\lambda< \mu$ and suppose $Y_\lambda$ lies in $\Ccal$.  Since $\Ccal$ is closed under cokernels,  $C/Y_\lambda$ lies in $\Ccal$ as well (we can assume without loss of generality that $f$ is an inclusion map). Hence $Y_{\lambda+1}/Y_\lambda$, which lies in $\Gen{\Ccal}$ and is a subobject of $C/Y_\lambda$,  also lies in $\Ccal$. Since $\Ccal$ is extension-closed, we conclude that $Y_{\lambda+1}$ lies in $\Ccal$.
Finally, let $\nu\leq\mu$ be a limit ordinal, and assume that $Y_\lambda$  lies in $\Ccal$ for any $\lambda<\nu$. Since $\Ccal$ is closed under coproducts and cokernels, it is also closed under direct limits and, therefore, $Y_\nu=\varinjlim_{\lambda<\nu} Y_\lambda$ lies in $\Ccal$.
\end{proof}

The following result can be viewed as a large version of \cite[Proposition 3.3]{MS1}.

\begin{proposition}\label{afiltgen}
Let $\Ccal$ be a full subcategory of $\ModA$ closed under kernels, cokernels, coproducts and extensions. Then we have $\mathfrak a(\FiltGen{\Ccal})=\Ccal$.
\end{proposition}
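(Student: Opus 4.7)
The plan is to prove the two inclusions separately, in each case exploiting Lemma~\ref{sub} to upgrade membership in $\FiltGen{\Ccal}$ to membership in $\Ccal$ whenever the module in question sits inside some element of $\Ccal$. Note first that $\mathfrak{a}(\FiltGen{\Ccal})$ is well-defined by Lemma~\ref{torsion}.

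For the inclusion $\Ccal \subseteq \mathfrak{a}(\FiltGen{\Ccal})$, I would fix $X \in \Ccal$ (which lies in $\FiltGen{\Ccal}$ via the trivial one-step filtration) and a map $g\colon Y \ra X$ with $Y \in \FiltGen{\Ccal}$. Starting from a given $\Gen{\Ccal}$-filtration $(Y_\lambda)_{\lambda \leq \mu}$ of $Y$, I set $K_\lambda := Y_\lambda \cap \Ker{g}$ and $I_\lambda := g(Y_\lambda)$. The chain $(K_\lambda)$ is continuous at limit ordinals because intersection commutes with directed unions, and at successor stages a direct diagram chase produces the short exact sequence
$$0 \ra K_{\lambda+1}/K_\lambda \ra Y_{\lambda+1}/Y_\lambda \ra I_{\lambda+1}/I_\lambda \ra 0.$$
Each $I_\lambda$ is a quotient of $Y_\lambda$, so it lies in $\FiltGen{\Ccal}$; since $I_\lambda$ sits inside $X \in \Ccal$, Lemma~\ref{sub} places it in $\Ccal$, and hence $I_{\lambda+1}/I_\lambda \in \Ccal$ by cokernel-closure. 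Composing a $\Ccal$-surjection onto $Y_{\lambda+1}/Y_\lambda$ with the projection to $I_{\lambda+1}/I_\lambda$ and passing to the kernel (which lies in $\Ccal$ by kernel-closure) yields a $\Ccal$-surjection onto $K_{\lambda+1}/K_\lambda$, so this layer belongs to $\Gen{\Ccal}$ and $(K_\lambda)$ is a $\Gen{\Ccal}$-filtration of $\Ker{g}$.

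For the reverse inclusion $\mathfrak{a}(\FiltGen{\Ccal}) \subseteq \Ccal$, let $X \in \mathfrak{a}(\FiltGen{\Ccal})$ with a chosen $\Gen{\Ccal}$-filtration $(X_\lambda)_{\lambda \leq \mu}$. The key preliminary is that any submodule of $X$ lying in $\FiltGen{\Ccal}$ inherits the $\mathfrak{a}(\FiltGen{\Ccal})$-property, by composing test maps with the inclusion into $X$. In particular each $X_\lambda$ lies in $\mathfrak{a}(\FiltGen{\Ccal})$, so by Proposition~\ref{abelian} each layer $X_{\lambda+1}/X_\lambda$ does too. Since the layer is moreover in $\Gen{\Ccal}$, I would pick a surjection $W \twoheadrightarrow X_{\lambda+1}/X_\lambda$ with $W \in \Ccal$: the $\mathfrak{a}$-property forces its kernel into $\FiltGen{\Ccal}$, and Lemma~\ref{sub} then places it in $\Ccal$, making the layer a cokernel of a map in $\Ccal$. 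A transfinite induction on $\lambda$ now yields $X_\lambda \in \Ccal$ for every $\lambda$: successor ordinals use extension-closure of $\Ccal$, and limit ordinals use that $\Ccal$ is closed under filtered colimits, which in turn follows from closure under coproducts and cokernels.

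The main obstacle is that $\Gen{\Ccal}$ is typically not closed under extensions, so $\FiltGen{\Ccal}$ properly contains $\Gen{\Ccal}$ in general, and one cannot directly build a $\Ccal$-surjection onto an arbitrary $X \in \FiltGen{\Ccal}$. This is precisely what forces the layer-by-layer analysis in both directions, with Lemma~\ref{sub} playing the essential role of converting filtration-level membership in $\FiltGen{\Ccal}$ into genuine membership in $\Ccal$.
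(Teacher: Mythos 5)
Your argument is correct and follows essentially the same route as the paper: both directions proceed layer-by-layer along the $\Gen{\Ccal}$-filtration, with Lemma~\ref{sub} converting membership in $\FiltGen{\Ccal}$ into membership in $\Ccal$ for subobjects of modules in $\Ccal$, Proposition~\ref{abelian} supplying the cokernel-closure needed on the $\mathfrak a$-side, and a transfinite induction handling limit ordinals via closure of $\Ccal$ under directed colimits. The only cosmetic difference is that you organise the forward inclusion around the short exact sequence $0\to K_{\lambda+1}/K_\lambda\to Y_{\lambda+1}/Y_\lambda\to I_{\lambda+1}/I_\lambda\to 0$ with a composition-then-kernel argument, where the paper instead first treats the $\Gen{\Ccal}$ base case via a pullback diagram and then feeds it into the filtration step, but these are the same computation.
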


\begin{proof}
Let us first show that $\Ccal$ is contained in $\mathfrak a(\FiltGen{\Ccal})$. Consider a map $f:X\longrightarrow C$ with $X$ in $\FiltGen{\Ccal}$ and $C$ in $\Ccal$. By the previous lemma, without loss of generality we may assume $f$ to be surjective. We have to show that $K:=\Ker{f}$ lies in $\FiltGen{\Ccal}$. If $X$ lies in $\Gen{\Ccal}$, then there is $W$ in $\Ccal$ and a surjection $g:W\longrightarrow X$ inducing the following diagram.
$$\xymatrix{0\ar[r]&L\ar[r]^a\ar[d]^{b}&  W\ar[r]^{fg}\ar[d]^g& C\ar[r]\ar@{=}[d]&0\\0\ar[r]& K\ar[r]& X\ar[r]^f& C\ar[r]&0}$$
Since $g$ is a surjection, so is $b$, and since $\Ccal$ is closed under kernels, $L$ lies in $\Ccal$. Hence, $K$ lies in $\Gen{\Ccal}$. Now consider a $\Gen{\Ccal}$-filtration $(X_\lambda:\lambda\leq \mu)$ of $X$. It is easy to see that $(K_\lambda:\lambda\leq \mu)$, where $K_\lambda$ is the kernel of the restriction $f_\lambda:X_\lambda\longrightarrow C$ of $f$, is a filtration of $K=\Ker{f}$.
We show that this is in fact a $\Gen{\Ccal}$-filtration. Let $\lambda<\mu$ and consider the following commutative diagram
$$\xymatrix{0\ar[r]&X_\lambda\ar[r]^a\ar[d]^{\tilde{f}_\lambda}&  X_{\lambda+1}\ar[r]^{p}\ar[d]^{f_{\lambda+1}}& X_{\lambda+1}/X_\lambda\ar[r]\ar[d]^b&0\\0\ar[r]& Im(f_\lambda)\ar[r]& C\ar[r]^{d \qquad}& C/Im(f_\lambda)\ar[r]&0}$$
with $\tilde{f}_\lambda$ being the natural factorisation of $f_\lambda$ via its image.
By Lemma~\ref{sub}, we have that $\Img{f_\lambda}$ lies in $\Ccal$. Therefore, so does $C/\Img{f_\lambda}$. By the previous argument, using that  $X_{\lambda+1}/X_\lambda$ lies in $\Gen{\Ccal}$, we get that $\Ker{b}\cong K_{\lambda+1}/K_\lambda$ lies in $\Gen{\Ccal}$.

It remains to show  that $\mathfrak a(\FiltGen{\Ccal})\subseteq \Ccal$. Take $Y$  in $\mathfrak a(\FiltGen{\Ccal})$. Then $Y$ lies in $\FiltGen{\Ccal}$, and we proceed by transfinite induction as before. If $Y$ lies in $\Gen{\Ccal}$, then there is a surjection $f:C\longrightarrow Y$ with $C$ in $\Ccal$ (as $\Ccal$ is closed under coproducts). Since $Y$ lies in $\mathfrak a(\FiltGen{\Ccal})$, the kernel $K:=\Ker{f}$ lies in $\FiltGen{\Ccal}$, and even in $\Ccal$ by Lemma~\ref{sub}. Then $Y$  lies in $\Ccal$ as $\Ccal$ is closed under cokernels. Now suppose that $Y$ has a $\Gen{\Ccal}$-filtration $(Y_\lambda:\lambda\leq \mu)$. Let $\lambda< \mu$ and suppose that $Y_\lambda$ lies in $\Ccal$. Then $\mathfrak a(\FiltGen{\Ccal})$, which contains $\Ccal$ and is clearly a subcategory closed under subobjects in $\FiltGen{\Ccal}$, must contain both $Y_\lambda$   and $Y_{\lambda+1}$, and therefore also  $Y_{\lambda+1}/Y_\lambda$ by Proposition~\ref{abelian}. But $Y_{\lambda+1}/Y_\lambda$ lies in $\Gen{\Ccal}$ and, hence, it lies in $\Ccal$ by the argument above. Since $\Ccal$ is closed under extensions, we infer that also $Y_{\lambda+1}$ belongs to $\Ccal$.
Finally, let  $\nu\leq\mu$ be a limit ordinal and assume that all $Y_\lambda$ with $\lambda<\nu$ belong to $\Ccal$. Since
$\Ccal$ is  closed under coproducts and cokernels, it follows that $Y_\nu=\varinjlim_{\lambda<\mu}Y_\lambda$ also lies in $\Ccal$, thus finishing the proof.
\end{proof}

We remark that under our assumptions above  $\Ccal$ is not necessarily bireflective.

\begin{example}  
Let $A$ be the Kronecker algebra, and let $\tube$ denote the class of all finite dimensional indecomposable regular modules.
Take $\Ccal$ to be the direct limit closure of $\add\tube$. This is a full subcategory of $\ModA$ closed under kernels, cokernels, coproducts and extensions which contains all Pr\"ufer modules and lies in the torsion class $\Gen{\tube}=\Gen{\Ccal}=\FiltGen{\Ccal}$. The corresponding torsionfree class $\tube^o$ is the class of all modules cogenerated by the generic module $G$. For details we refer to \cite{RR}. Notice that $G$  is a summand of any countable product of copies of a Pr\"ufer module. This shows that $\Ccal$ it is not closed under products (so it is not bireflective).
\end{example}

We now aim at a large version of \cite[Proposition 3.9]{MS1}. Let us first recall the following result.
\begin{lemma}\cite[(4.4) Lemma]{CB}\label{cb} Let $A$ be a right coherent ring, and let $(\Xcal,\Ycal)$ be a torsion pair in $\modA$. Then the classes $\Tcal:=\varinjlim \Xcal$ and $\Fcal:=\varinjlim \Ycal$ consisting of all direct limits of modules in $\Xcal$ and $\Ycal$, respectively, form a torsion pair $(\Tcal, \Fcal)$ in $\ModA$ and, furthermore, $\Tcal=\Gen{\Xcal}$ and $\Fcal=\Xcal ^o$. 
\end{lemma}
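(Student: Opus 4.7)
The plan is to extend the torsion decomposition from $\modA$ to $\ModA$ by taking filtered direct limits. Two ingredients enter: every $M \in \ModA$ is a filtered colimit $M = \varinjlim M_i$ of finitely presented modules $M_i \in \modA$, and right coherence of $A$ makes $\modA$ abelian, so the torsion pair $(\Xcal, \Ycal)$ in $\modA$ admits a functorial short exact sequence $0 \to t(M_i) \to M_i \to M_i/t(M_i) \to 0$ with $t(M_i) \in \Xcal$ and $M_i/t(M_i) \in \Ycal$. Since $\Xcal$ is closed under quotients in $\modA$, each transition map $M_i \to M_j$ restricts to a map $t(M_i) \to t(M_j)$, yielding a compatible subsystem; exactness of filtered colimits then produces
\[
0 \to \varinjlim t(M_i) \to M \to \varinjlim M_i/t(M_i) \to 0
\]
whose outer terms lie in $\Tcal$ and $\Fcal$ respectively.

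To conclude $(\Tcal, \Fcal)$ is a torsion pair in $\ModA$, I would verify Hom-orthogonality directly: for $T = \varinjlim X_i \in \Tcal$ and $F = \varinjlim Y_j \in \Fcal$, compactness of each $X_i \in \modA$ gives
\[
\Hom_A(T, F) = \varprojlim_i \varinjlim_j \Hom_A(X_i, Y_j) = 0,
\]
using that $\Hom_A(X_i, Y_j) = 0$ already in $\modA$ by assumption.

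For the identification $\Tcal = \Gen{\Xcal}$, the inclusion $\Tcal \subseteq \Gen{\Xcal}$ is immediate since $\varinjlim X_i$ is a quotient of $\bigoplus X_i$. Conversely, any $M \in \Gen{\Xcal}$ admits a surjection $X^{(I)} \twoheadrightarrow M$ with $X \in \Xcal$; since $X^{(I)}$ is the filtered colimit of its finite direct sums $X^n$, and $\Xcal$ is closed under finite coproducts as a torsion class in $\modA$, one has $X^{(I)} \in \Tcal$. By the torsion pair property just established, $\Tcal$ is closed under quotients, so $M \in \Tcal$. For $\Fcal = \Xcal^\circ$, the inclusion $\Fcal \subseteq \Xcal^\circ$ follows from the orthogonality. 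Conversely, if $M \in \Xcal^\circ$, then $\Hom_A(T, M) = 0$ for every $T \in \Gen{\Xcal} = \Tcal$, so in particular $\Hom_A(t(M), M) = 0$; this forces the canonical inclusion $t(M) \hookrightarrow M$ to vanish, hence $M = M/t(M) \in \Fcal$.

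The main technical point is the functoriality and colimit-compatibility of the torsion decomposition in $\modA$, which is exactly what coherence of $A$ provides: it makes $\modA$ abelian, so $t(-)$ is a well-defined subfunctor of the identity on $\modA$ whose values behave coherently under the transition maps of the directed system; everything else reduces to standard facts about filtered colimits and compactness of finitely presented modules.
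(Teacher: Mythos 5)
Your argument is correct in substance. Note, though, that the paper does not prove this statement itself: it imports it as a citation to Crawley-Boevey, \cite[(4.4) Lemma]{CB}, so there is no in-paper proof to compare against. What you have written is essentially the standard argument, and it is likely close to the one in the cited reference: coherence makes $\modA$ abelian so that the torsion radical $t$ is a well-defined subfunctor of the identity on $\modA$; functoriality plus exactness of filtered colimits in $\ModA$ produces, for $M=\varinjlim M_i$, the exact sequence $0\to\varinjlim t(M_i)\to M\to\varinjlim M_i/t(M_i)\to 0$ with outer terms in $\Tcal$ and $\Fcal$; Hom-orthogonality follows from compactness of finitely presented modules; and these two facts characterise a torsion pair. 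The identifications $\Tcal=\Gen{\Xcal}$ and $\Fcal=\Xcal^{\circ}$ then follow as you say. Two small points worth tightening: in the $\Gen{\Xcal}\subseteq\Tcal$ direction you should allow an arbitrary coproduct $\bigoplus_{i} X_i$ with $X_i\in\Xcal$ (not just copies of one $X$); the same argument applies since such a coproduct is the filtered colimit of its finite sub-sums, each of which lies in $\Xcal$. And in the final step the symbol $t(M)$ now denotes the torsion part of $M$ for the $(\Tcal,\Fcal)$ pair in $\ModA$ (not the radical on $\modA$); the logic is fine, but a separate notation would avoid confusion.
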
 

\begin{proposition}\label{filtgena}
Let $A$ be a finite dimensional algebra and let $T$ be a finite dimensional silting module. Then $\FiltGen{\mathfrak a(\Gen{T})}=\Gen{T}$.
\end{proposition}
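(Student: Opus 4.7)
The plan is to establish the two inclusions of $\FiltGen{\mathfrak a(\Gen T)}=\Gen T$ separately. The inclusion $\FiltGen{\mathfrak a(\Gen T)}\subseteq \Gen T$ is immediate: $\mathfrak a(\Gen T)\subseteq \Gen T$ straight from (\ref{alpha}), so $\Gen{\mathfrak a(\Gen T)}\subseteq \Gen T$, and $\Gen T$, being a torsion class, is closed under (transfinite) filtrations by modules it already contains.

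For the reverse inclusion $\Gen T\subseteq \FiltGen{\mathfrak a(\Gen T)}$, my strategy is to reduce to the finite-dimensional case and then bootstrap to arbitrary modules via direct limits. Applying Lemma~\ref{torsion} to $\Ccal:=\Gen{\mathfrak a(\Gen T)}$ (which is closed under quotients and coproducts, as is any class of the form $\Gen{\mathcal D}$), one sees that $\FiltGen{\mathfrak a(\Gen T)}$ is a torsion class in $\ModA$ and, in particular, is closed under direct limits. Since $A$ is finite dimensional, $\gen T$ is a torsion class in $\modA$, and Crawley--Boevey's Lemma~\ref{cb} identifies $\Gen T$ with $\varinjlim\gen T$. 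Thus it suffices to prove $\gen T\subseteq\FiltGen{\mathfrak a(\Gen T)}$.

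I would deduce this last inclusion from the finite-dimensional analogue $\gen T=\FiltGen{\mathfrak a(\gen T)}$ of \cite[Proposition~3.9]{MS1}: every $X\in\gen T$ admits a finite filtration by modules in $\gen{\mathfrak a(\gen T)}$. To transport such a filtration into one with factors in $\Gen{\mathfrak a(\Gen T)}$, I would compare the small and large incarnations of $\mathfrak a$. Since any finite dimensional silting module is minimal (as observed in the proof of Theorem~\ref{inj}), letting $T_1$ denote its canonical partial silting companion, Proposition~\ref{equals a} gives $\mathfrak a(\Gen T)=\Gen T\cap T_1^\circ$, while \cite[Lemma~3.8]{MS1} (recalled in the same proof) gives $\mathfrak a(\gen T)=\gen T\cap T_1^\circ$. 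Consequently $\mathfrak a(\gen T)\subseteq\mathfrak a(\Gen T)$, so any $\gen{\mathfrak a(\gen T)}$-filtration is in particular a $\Gen{\mathfrak a(\Gen T)}$-filtration, placing $X$ in $\FiltGen{\mathfrak a(\Gen T)}$ as required.

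The main obstacle is the finite-dimensional input $\gen T=\FiltGen{\mathfrak a(\gen T)}$: this is where the substantive content sits, relying on the structure of $\mathfrak a(\gen T)$ as a functorially finite wide subcategory of $\modA$. Once this is accepted, the passage to the large setting is essentially formal, combining Lemma~\ref{cb} with the direct-limit closure of the torsion class $\FiltGen{\mathfrak a(\Gen T)}$; the only subtlety is that one does not produce a single transfinite filtration of an arbitrary $X\in\Gen T$ by hand, but rather realises $X$ as a quotient of $\bigoplus X_i$ for a directed system of $X_i\in\gen T$, with the torsion-class property of $\FiltGen{\mathfrak a(\Gen T)}$ absorbing the rest.
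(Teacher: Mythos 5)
Your proof is correct and follows the same overall architecture as the paper: the easy inclusion $\FiltGen{\mathfrak a(\Gen T)}\subseteq\Gen T$, then the reduction via Lemma~\ref{cb} to showing $\gen T\subseteq\FiltGen{\mathfrak a(\Gen T)}$, then invoking \cite[Proposition~3.9]{MS1} for the $\modA$-level equality, and finally noting that everything hinges on the inclusion $\mathfrak a(\gen T)\subseteq\mathfrak a(\Gen T)$. Where you diverge is precisely in this last, pivotal step. The paper proves it directly by a direct-limit argument: given $X\in\mathfrak a(\gen T)$ and a map $g:Y\to X$ with $Y\in\Gen T$, it writes $Y=\varinjlim Y_i$ with $Y_i\in\gen T$, observes each $\Ker(g_i)\in\gen T$, and concludes $\Ker(g)=\varinjlim\Ker(g_i)\in\Gen T$. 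You instead use the structural identifications $\mathfrak a(\gen T)=\gen T\cap T_1^\circ$ (from the proof of Theorem~\ref{inj}, via \cite[Lemma~3.8]{MS1}) and $\mathfrak a(\Gen T)=\Gen T\cap T_1^\circ$ (Proposition~\ref{equals a}), from which the inclusion is immediate. Both routes are valid; yours leans on the minimal-silting/partial-silting machinery already developed earlier and is arguably cleaner given that framework, while the paper's is more elementary and self-contained at this point in the argument, not requiring the reader to recall the role of $T_1$. Your closing remark about realising an arbitrary $X\in\Gen T$ as a quotient of a coproduct rather than exhibiting a transfinite filtration by hand is the right way to read the final step; the paper glosses this as $\Gen T=\varinjlim\gen T$ being absorbed by the torsion class $\FiltGen{\mathfrak a(\Gen T)}$, which amounts to the same thing.
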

\begin{proof}
By construction we have $\FiltGen{\mathfrak a(\Gen{T})}\subseteq\Gen{T}$. For the 
reverse inclusion, we observe that $\Gen{T}\cap\modA$ coincides with the class $\gen{T}$ of all finitely $T$-generated modules, which is a functorially finite torsion class in $\modA$ by \cite[Theorem 2.7]{AIR}.   It follows from Lemma~\ref{cb} that $\Gen{T}=\varinjlim(\gen{T})$, and by \cite[Proposition 3.9]{MS1} we know that $\gen{T}$ consists of all modules with a finite filtration by quotients of modules in $\mathfrak a(\gen{T})$ (here   $\mathfrak a$  is  the map defined in (\ref{alpha}) for the abelian category  $\modA$). If we prove that $\mathfrak a (\gen{T})$ is contained in $\mathfrak a(\Gen{T})$ (here $\mathfrak a$  is  now  defined  for the abelian category  $\ModA$), then $\gen{T}$ will be contained in $\FiltGen{\mathfrak a(\Gen{T})}$ and, hence, so will $\Gen{T}=\varinjlim \gen{T}$, concluding our proof.
So, let us show that $\mathfrak a (\gen{T})\subseteq \mathfrak a(\Gen{T})$. Take a module $X$ in $\mathfrak a(\gen{T})$. Of course $X$ lies also in $\Gen{T}$.  Consider a map $g:Y\ra X$ with $Y$ in $\Gen{T}$. Then there is a direct system $(Y_i)_{i\in I}$ in $\gen{T}$ such that $Y=\varinjlim Y_i$, and the induced maps $g_i:Y_i\ra X$ form a direct system with $\Ker{g}=\varinjlim \Ker{g_i}$. Since by assumption $\Ker{g_i}$ belongs to $\gen{T}$ for any $i$, we conclude that $\Ker{g}$ lies in $\Gen{T}$.
\end{proof}

We obtain a large version of Theorem \ref{inj} together with an explicit description of the image of  $\mathfrak a$.

\begin{corollary}\label{cor inject}
If $A$ is a finite dimensional algebra, there is a commutative diagram of injections as follows.
$$\xymatrix{{\left\{\begin{array}{c}\text{\Small Equivalence classes} \\ \text{\Small of finite dimensional}\\ \text{\Small silting $A$-modules} \end{array}\right\}}\ar[rr]^{\alpha}\ar[dr]^{\mathfrak a} &  & {\left\{\begin{array}{c}\text{\Small Epiclasses of ring} \\ \text{\Small  epimorphisms $A\to B$}\\ \text{\Small with $\Tor_1^A(B,B)=0$} \end{array}\right\}}\ar[dl]_{\epsilon}\\ & {\left\{\begin{array}{c}\text{\Small  Bireflective}\\ \text{\Small extension-closed} \\ \text{\Small subcategories}\\ \text{\Small of $\ModA$}
\end{array}\right\}} &}$$
The map $\alpha$ assigns to the minimal silting module $T$ the associated silting ring epimorphism. The map $\epsilon$ is the bijection from Theorem \ref{epicl}(1) and $\mathfrak{a}$  is  the map defined in (\ref{alpha}) for the abelian category  $\ModA$. Moreover, the image of   $\mathfrak a$ consists of the bireflective extension-closed  subcategories $\Ccal$ in $\ModA$ for which $\FiltGen{\Ccal}$ is generated by a finite dimensional silting module.
\end{corollary}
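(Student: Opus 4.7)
The strategy is to assemble the corollary from results already established. First I would verify that the two arrows $\alpha$ and $\mathfrak{a}$ are well defined on the indicated domain. Every finite dimensional silting module $T$ over a finite dimensional algebra is minimal, since minimal projective presentations exist, so $T$ has an associated partial silting module $T_1$ with a projective presentation $\sigma_1$, and the silting ring epimorphism $A\longrightarrow A_{\{\sigma_1\}}$ is well defined; moreover, by \cite[Theorem 3.5]{AMV2} this epimorphism has $\Tor_1^A(B,B)=0$, so $\alpha$ lands in the target set. The map $\mathfrak{a}$ is well defined on arbitrary torsion classes, and its image under $\mathfrak{a}(\Gen{T})$ is bireflective and extension-closed by Proposition~\ref{equals a} combined with the bireflectivity of $\Xcal_{\sigma_1}$ discussed before Theorem~\ref{def:universallocalisation}.

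Next, I would verify commutativity of the triangle. Given a finite dimensional silting module $T$ with associated partial silting module $T_1$ and projective presentation $\sigma_1$, Proposition~\ref{equals a} gives $\mathfrak{a}(\Gen{T})=\Gen{T}\cap T_1^\circ=\Xcal_{\sigma_1}$. Since $\Xcal_{\sigma_1}$ is exactly the bireflective subcategory corresponding under $\epsilon$ to the silting ring epimorphism $\alpha(T)$, we get $\epsilon\circ\alpha=\mathfrak{a}$.

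For the injectivity of $\mathfrak{a}$, I would use Proposition~\ref{filtgena}: if $T$ and $T'$ are finite dimensional silting modules with $\mathfrak{a}(\Gen{T})=\mathfrak{a}(\Gen{T'})$, then $\Gen{T}=\FiltGen{\mathfrak{a}(\Gen{T})}=\FiltGen{\mathfrak{a}(\Gen{T'})}=\Gen{T'}$, so $T$ and $T'$ are equivalent silting modules. Injectivity of $\alpha$ then follows from the commutativity of the triangle together with the bijectivity of $\epsilon$ from Theorem~\ref{epicl}(2).

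Finally, to identify the image of $\mathfrak{a}$, I would argue both directions. If $\Ccal=\mathfrak{a}(\Gen{T})$ for some finite dimensional silting module $T$, then by Proposition~\ref{filtgena} we have $\FiltGen{\Ccal}=\Gen{T}$, so $\FiltGen{\Ccal}$ is generated by a finite dimensional silting module. Conversely, if $\Ccal$ is a bireflective extension-closed subcategory of $\ModA$ such that $\FiltGen{\Ccal}=\Gen{T}$ for some finite dimensional silting module $T$, then $\Ccal$ is closed under kernels, cokernels, coproducts and extensions, so Proposition~\ref{afiltgen} applies and yields $\mathfrak{a}(\Gen{T})=\mathfrak{a}(\FiltGen{\Ccal})=\Ccal$. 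The main point is simply to notice that Propositions~\ref{afiltgen} and~\ref{filtgena} fit together perfectly to provide the two-sided correspondence; no serious obstacle is expected beyond tracking what is being asserted on which side of each equality.
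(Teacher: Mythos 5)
Your proof is correct and follows essentially the same route as the paper, which proves injectivity of $\mathfrak{a}$ via Proposition~\ref{filtgena} and identifies its image via Proposition~\ref{afiltgen}; you simply flesh out the well-definedness, commutativity, and injectivity steps that the paper leaves implicit (they were already handled in Theorem~\ref{inj} and the surrounding discussion). One small inaccuracy: the fact that $\alpha(T)$ lands among ring epimorphisms with $\Tor_1^A(B,B)=0$ comes from $\Xcal_{\sigma_1}$ being bireflective and extension-closed (\cite[Proposition~3.3]{AMV2}) together with Theorem~\ref{epicl}(2), not from \cite[Theorem~3.5]{AMV2}, which is invoked in the paper only for finite-dimensionality of $B$.
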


\begin{proof}
By Proposition \ref{filtgena}, the map $\mathfrak a$ yields an injection from equivalence classes of finite dimensional silting modules  to bireflective extension-closed subcategories of $\ModA$. The description of the image follows from Proposition \ref{afiltgen}.
\end{proof}

Note that, if all torsion classes in $\ModA$ are generated by a finite dimensional silting module, then $\mathfrak a$ is a bijection. Let us turn to algebras with such property.


\section{$\tau$-tilting finite algebras}

In this section, $A$ will be a finite dimensional algebra and we denote by $\tau$ the Auslander-Reiten translate. Recall the following definitions.

\begin{definition}\cite{AIR} A finite dimensional $A$-module $M$ is said to be $\tau$-{\it rigid} if  $\Hom_A(M,\tau M)=0$, and it is called $\tau$-{\it tilting} if, in addition,  the number of non-isomorphic indecomposable direct summands of $M$ coincides with the number of isomorphism classes of simple $A$-modules. Finally, $M$ is said to be {\it support $\tau$-tilting}, if it is $\tau$-tilting over $A/AeA$ for an idempotent $e\in A$.
\end{definition}

The next theorem provides a characterisation of algebras that have only finitely many $\tau$-tilting modules, up to additive equivalence. Recall that a finite dimensional $A$-module is said to be a \textit{brick} if its endomorphism ring is a division ring.

\begin{theorem}\cite{DIJ}\label{dij} The following statements are equivalent for a finite dimensional algebra $A$.
\begin{enumerate}
\item There are only finitely many isomorphism classes of indecomposable $\tau$-rigid $A$-modules.
\item There are only finitely many isomorphism classes of basic $\tau$-tilting $A$-modules.
\item There are only finitely many isomorphism classes of bricks in $\modA$.
\item There are only finitely many torsion classes in $\modA$.
\item Every torsion class in $\modA$ is functorially finite.
\end{enumerate}
\end{theorem}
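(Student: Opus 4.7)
The organising principle is the Adachi--Iyama--Reiten bijection~\cite{AIR} between basic support $\tau$-tilting $A$-modules and functorially finite torsion classes in $\modA$ (sending $M\mapsto\gen{M}$). Combined with the easy observation that $\tau$-tilting finiteness of $A$ passes to idempotent quotients $A/AeA$, this identifies (2) with the statement that $\modA$ has only finitely many functorially finite torsion classes; that translation is the backbone of the argument.

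First I would dispatch (1)$\Leftrightarrow$(2). Every basic $\tau$-tilting module has exactly $n$ non-isomorphic indecomposable summands, where $n$ is the number of isomorphism classes of simple $A$-modules, each of them $\tau$-rigid; conversely, every indecomposable $\tau$-rigid module is a summand of some $\tau$-tilting module by Bongartz completion. Hence (1) and (2) are both equivalent to finiteness of the set of isomorphism classes of indecomposable $\tau$-rigid modules.

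Next I would attack (2)$\Leftrightarrow$(4)$\Leftrightarrow$(5). The directions (5)$\Rightarrow$(4) and (4)$\Rightarrow$(2) are relatively soft: under (5) the set of all torsion classes coincides with the functorially finite ones, which biject with basic support $\tau$-tilting modules, so finiteness transfers. The hard direction is (2)$\Rightarrow$(5): assuming (2), a hypothetical non-functorially-finite torsion class $\mathcal{T}$ should produce infinitely many distinct functorially finite torsion classes, contradicting (2). The strategy is to exploit the lattice of torsion classes together with Bongartz-type approximations: failure of functorial finiteness of $\mathcal{T}$ precludes finite stabilisation of its functorially finite approximations from below and above, and so yields an infinite chain of distinct covers in the lattice of functorially finite torsion classes. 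Finally, for (3) the link runs through the brick labelling of the Hasse quiver of torsion classes: each cover $\mathcal{T}_1\subsetneq\mathcal{T}_2$ carries a unique brick, namely the simple object of $\mathfrak{a}(\mathcal{T}_2)$ not belonging to $\mathcal{T}_1$, and conversely every brick $S$ arises from such a cover attached to $\FiltGen{S}$. Finiteness of the set of bricks is thus equivalent to finiteness of the lattice of torsion classes.

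The main obstacle is the implication (2)$\Rightarrow$(5). Turning the abstract failure of functorial finiteness at a single torsion class into an honest infinite family of distinct functorially finite torsion classes requires a careful analysis of Bongartz completions inside the lattice of torsion classes and of how non-approximability propagates through covers. This is the technical heart of the original proof in~\cite{DIJ}, and I would expect it to be the most delicate step of my write-up, interfacing directly with the brick-label combinatorics required for the equivalence with (3).
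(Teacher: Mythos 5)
The paper does not prove Theorem~\ref{dij}: it is stated as a citation to \cite{DIJ} and used as a black box. There is therefore no ``paper's own proof'' against which to compare; what you are sketching is a reconstruction of the Demonet--Iyama--Jasso argument.

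As such a reconstruction, your plan is a reasonable skeleton and captures the right structural tools (the AIR bijection with functorially finite torsion classes, Bongartz completion for (1)$\Leftrightarrow$(2), the brick labelling of covers for the equivalence with (3)). But it is a plan rather than a proof. The step you flag yourself as the ``main obstacle''---(2)$\Rightarrow$(5)---is the actual mathematical content, and ``exploit the lattice together with Bongartz-type approximations... yields an infinite chain of distinct covers'' is a description of the desired conclusion, not an argument for it. Concretely, you would need the fact that any torsion class $\mathcal T$ in $\modA$ has both a largest functorially finite torsion class $\mathcal T^{-}\subseteq\mathcal T$ and a smallest functorially finite torsion class $\mathcal T^{+}\supseteq\mathcal T$ only once one \emph{already} knows there are finitely many functorially finite torsion classes; the DIJ argument instead builds, from a single non-functorially-finite $\mathcal T$, a strictly decreasing infinite chain of functorially finite torsion classes by iterated mutation (their Theorem~3.8 and Lemma~3.10), and this construction needs to be spelled out. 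Similarly, the brick-label description you give for a cover $\mathcal T_1\subsetneq\mathcal T_2$ (``the simple object of $\mathfrak a(\mathcal T_2)$ not in $\mathcal T_1$'') is not quite the standard statement; the label is the unique brick $S$ with $\mathcal T_2=\Filt{\mathcal T_1\cup\{S\}}$, equivalently the unique brick in $\mathcal T_2\cap\mathcal T_1^{\circ}$, and establishing existence and uniqueness of this brick is itself nontrivial and needs to be proved (or cited) before (3) can be tied to (4) and (5). Until those two technical cores are filled in, this is an outline of the known proof rather than a proof.
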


\begin{definition} 
A finite dimensional algebra $A$ is called $\tau$-{\it tilting finite} if it satisfies the equivalent conditions of Theorem \ref{dij}.
\end{definition}

\begin{example}
Let $A$ be a preprojective algebra of Dynkin type. Then $A$ is $\tau$-tilting finite by \cite{Mi}. Indeed, the isomorphism classes of basic support $\tau$-tilting $A$-modules correspond bijectively to the elements of the Weyl group associated to the Dynkin graph. It is also not hard to check that every brick in $\modA$ has no non-trivial self-extensions  and, hence, its dimension vector needs to be a positive root of the underlying Dynkin graph. In particular, up to isomorphism, there are only finitely many bricks in $\modA$. 
\end{example}

\begin{example}
The following example is due to \cite{R}. Let $Q$ be the quiver
$$\xymatrix{ & &  &  & & n-1\ar[dr]^\beta & \\ 1\ar[r] & 2\ar[r] & 3\ar[r] & ...\ar[r] & n-2\ar[rr]\ar[ur]^\alpha & & n}$$
and let $A$ be the finite dimensional algebra given by $Q$ modulo the relation $\beta\alpha$. Then $A$ has global dimension two. It was observed in \cite{R} that for $n\geq 9$ the algebra $A$ is of wild representation type and that, independently of $n$, there are only finitely many bricks in $\modA$, up to isomorphism. In particular, $A$ is $\tau$-tilting finite. Note that, by choosing $n$ large enough and adding relations to the ideal above, we can construct further $\tau$-tilting finite algebras of wild representation type and of any finite global dimension.
\end{example}

Since support $\tau$-tilting modules over $A$ are precisely the finite dimensional silting modules \cite[Proposition 3.15]{AMV1}, it is only natural to ask whether any torsion class in $\ModA$ whose intersection with $\modA$ is functorially finite is a silting torsion class. The following lemma answers this question positively. For its proof, recall that a subcategory of $\ModA$ is said to be \textit{definable} if it is closed under products, direct limits and pure submodules ($X$ is a \textit{pure submodule} of $Y$ if the inclusion of $X$ into $Y$ remains injective after tensoring it with any left $A$-module).

\begin{lemma}\label{ff}
Let $\Tcal$ be a torsion class in $\ModA$ such that $\Tcal\cap\modA$ is functorially finite. Then there is a finite dimensional silting module $T$ such that $\Tcal=\Gen{T}$.
\end{lemma}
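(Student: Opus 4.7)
My plan is to apply Theorem \ref{dij} and \cite[Proposition 3.15]{AMV1} to produce a finite dimensional silting module $T\in\modA$ with $\gen{T}=\Tcal\cap\modA$, and then to identify $\Tcal$ with the silting torsion class $\Gen{T}$ by analysing the canonical decomposition of elements of $\Tcal$ with respect to the Crawley--Boevey torsion pair $(\Gen{T},\gen{T}^\circ)$ in $\ModA$.

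By Theorem \ref{dij}, the functorial finiteness of $\Tcal\cap\modA$ yields a basic support $\tau$-tilting module $T$ with $\gen{T}=\Tcal\cap\modA$; by \cite[Proposition 3.15]{AMV1} $T$ is finite dimensional silting, and $\Gen{T}$ is a silting torsion class with $\Gen{T}\cap\modA=\gen{T}$. The inclusion $\Gen{T}\subseteq\Tcal$ is immediate since $T\in\Tcal$ and $\Tcal$ is closed under coproducts and quotients. For the reverse inclusion I use Lemma \ref{cb}: applied to the torsion pair $(\gen{T},\gen{T}^\perp)$ in $\modA$, it produces the torsion pair $(\Gen{T},\gen{T}^\circ)=(\varinjlim\gen{T},\varinjlim\gen{T}^\perp)$ in $\ModA$. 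For $X\in\Tcal$, consider the canonical decomposition
$$0\to X^+\to X\to X^-\to 0$$
with $X^+\in\Gen{T}\subseteq\Tcal$ and $X^-\in\gen{T}^\circ$; as a quotient of $X$, the module $X^-$ lies in $\Tcal$, so $X^-\in\Tcal\cap\gen{T}^\circ$, and it suffices to show this intersection is zero.

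So take $Y\in\Tcal\cap\gen{T}^\circ$ and any finitely generated submodule $Y_0\subseteq Y$. The torsion part $t(Y_0)$ of $Y_0$ with respect to $(\Tcal,\Fcal)$ lies in $\Tcal\cap\modA=\gen{T}$; the inclusion $t(Y_0)\hookrightarrow Y\in\gen{T}^\circ$ must vanish, so $t(Y_0)=0$ and $Y_0\in\gen{T}^\perp=\Fcal\cap\modA$. Thus every finitely generated submodule of $Y$ lies in $\Fcal$.

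The main obstacle is the final step: upgrading ``every finitely generated submodule of $Y$ lies in $\Fcal$'' to ``$Y$ itself lies in $\Fcal$,'' for then $Y\in\Tcal\cap\Fcal=0$ and the proof concludes. The key input is that the silting class $\Gen{T}$ is closed under direct limits (since $\Gen{T}=\Dcal_\sigma$ for a finitely generated projective presentation $\sigma$ of $T$, and $\Hom(\sigma,-)$ commutes with direct limits), and hence so is its torsion-free class $\gen{T}^\circ$. Combined with $\Fcal\subseteq\gen{T}^\circ$ and $\Fcal\cap\modA=\gen{T}^\perp=\gen{T}^\circ\cap\modA$, this closure under direct limits should force $\Fcal=\gen{T}^\circ$, so that $Y=\varinjlim Y_0\in\gen{T}^\circ=\Fcal$, giving $\Tcal=\Gen{T}$ as required.
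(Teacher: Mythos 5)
Your reduction to showing that $\Tcal\cap\gen{T}^\circ=0$ is a reasonable alternative route to the paper's argument, and your observation that every finitely generated submodule of a module $Y\in\Tcal\cap\gen{T}^\circ$ lies in $\gen{T}^\perp=\Fcal\cap\modA$ is correct. But the final step, which you yourself flag as ``the main obstacle,'' is not actually closed by the argument you give. You deduce that $\gen{T}^\circ$ is closed under direct limits (true, since $\gen{T}$ consists of finitely presented modules, so $\gen{T}^\circ=\varinjlim\gen{T}^\perp$ by Lemma \ref{cb}), but this is the wrong class: since $Y\in\gen{T}^\circ$ is part of the hypothesis, concluding $Y=\varinjlim Y_0\in\gen{T}^\circ$ gives you nothing new. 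What you would need is that $\Fcal=\Tcal^\circ$ itself is closed under direct limits, so that $Y_0\in\Fcal$ for all $Y_0$ forces $Y\in\Fcal$; but you have no control over $\Tcal$ (it is an arbitrary torsion class in $\ModA$, not a priori generated by finitely presented modules), and torsion-free classes are in general not closed under direct limits. The claim that ``$\Fcal\subseteq\gen{T}^\circ$ and $\Fcal\cap\modA=\gen{T}^\circ\cap\modA$ plus closure of $\gen{T}^\circ$ under direct limits force $\Fcal=\gen{T}^\circ$'' is not a valid inference; a class and a proper subclass can agree on $\modA$ while the larger one is closed under direct limits.

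The paper sidesteps all of this by a purity argument: every $X\in\Tcal$ is a pure submodule of the product of all its finitely generated quotients, and these quotients lie in $\Tcal\cap\modA=\gen{T}\subseteq\Gen{T}$. Since silting classes are \emph{definable} (closed under products, direct limits, and pure submodules, by \cite[Corollary 3.5]{AMV1}), this immediately gives $X\in\Gen{T}$, so $\Tcal\subseteq\Gen{T}$ with no need to decompose $X$ at all. If you wish to keep your decomposition, you can finish by applying this same purity argument to $Y\in\Tcal\cap\gen{T}^\circ$ to get $Y\in\Gen{T}\cap\gen{T}^\circ=0$. One further remark: the existence of $T$ with $\gen{T}=\Tcal\cap\modA$ comes from \cite[Theorem 2.7]{AIR}, not from Theorem \ref{dij}, which is a characterisation of $\tau$-tilting finiteness rather than a statement about a single functorially finite torsion class.
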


\begin{proof}
If  $\Tcal\cap\modA$ is a functorially finite torsion class in $\modA$, it has the form $\gen{T}$ for a finite dimensional silting module $T$, by \cite[Theorem 2.7]{AIR}. Then the direct limit closure $\varinjlim\gen{T}$ coincides with $\Gen{T}$ by Lemma~\ref{cb}, and it is obviously contained in $\Tcal$.
Conversely, since every module $X$ is a pure submodule of the direct product of all its finitely generated quotients, see \cite[2.2, Example 3]{CB}, any module $X$ in $\Tcal$ lies in the smallest definable subcategory containing $\Tcal\cap\modA=\gen{T}$. Since silting classes are definable by \cite[Corollary 3.5]{AMV1}, it is then clear that the smallest definable subcategory containing $\gen{T}$ is $\Gen{T}$, thus proving that $\Tcal=\Gen{T}$.
\end{proof}

We are now ready for the main results of this note. Observe that the diagram of injections from Theorem \ref{inj} becomes a diagram of bijections if we assume our algebra $A$ to be $\tau$-tilting finite, see \cite[Corollary 3.11]{MS1}. In particular, it follows that an algebra $A$ is $\tau$-tilting finite if and only if there are only finitely many epiclasses of ring epimorphisms $A\ra B$ with $\Tor^A_1(B,B)=0$ and such that $B$ is again a finite dimensional algebra. The following result shows that an analogous statement holds true if we allow for large modules and ring epimorphisms $A\ra B$ for which $B$ is no longer assumed to be a finite dimensional algebra.
 
\begin{theorem}\label{bij}
If  $A$ is a $\tau$-{tilting finite} algebra, there is a commutative diagram of bijections as follows.
$$\xymatrix{{\left\{\begin{array}{c}\text{\Small Equivalence classes} \\ \text{\Small of finite dimensional}\\ \text{\Small silting $A$-modules} \end{array}\right\}}\ar[rr]^{\alpha}\ar[dr]^{\mathfrak a} &  & {\left\{\begin{array}{c}\text{\Small Epiclasses of ring} \\ \text{\Small  epimorphisms $A\to B$}\\ \text{\Small with $\Tor_1^A(B,B)=0$} \end{array}\right\}}\ar[dl]_{\epsilon}\\ & {\left\{\begin{array}{c}\text{\Small  Bireflective}\\ \text{\Small extension-closed} \\ \text{\Small subcategories}\\ \text{\Small of $\ModA$}
\end{array}\right\}} &}$$
The maps are defined as in Corollary \ref{cor inject}.
Moreover, it follows that all ring epimorphisms $A\ra B$ with $\Tor_1^A(B,B)=0$ are universal localisations with $dim\, B<\infty$.
\end{theorem}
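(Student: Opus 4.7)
The plan is to upgrade the injections from Corollary \ref{cor inject} to bijections under the $\tau$-tilting finite hypothesis. Since $\epsilon$ is always bijective by Theorem \ref{epicl}(1) and $\alpha=\epsilon^{-1}\circ\mathfrak{a}$, it is enough to prove that $\mathfrak{a}$ is surjective.

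Corollary \ref{cor inject} already identifies the image of $\mathfrak{a}$ as the collection of bireflective extension-closed subcategories $\Ccal\subseteq\ModA$ for which $\FiltGen{\Ccal}=\Gen{T}$ for some finite dimensional silting module $T$. My task therefore reduces to verifying this condition for \emph{every} bireflective extension-closed subcategory $\Ccal$ of $\ModA$. Given such a $\Ccal$, note first that it is in particular closed under kernels, cokernels, coproducts and extensions, so Proposition \ref{afiltgen} will subsequently give $\mathfrak{a}(\FiltGen{\Ccal})=\Ccal$.

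To produce the silting module, I would form $\Tcal:=\FiltGen{\Ccal}$, which is a torsion class in $\ModA$ by Lemma \ref{torsion}. The intersection $\Tcal\cap\modA$ is then a torsion class in $\modA$: it inherits closure under finite direct sums, quotients and extensions from $\Tcal$, and closure of $\modA$ itself under these operations. By Theorem \ref{dij}(5), the $\tau$-tilting finite hypothesis forces every torsion class in $\modA$ to be functorially finite, so $\Tcal\cap\modA$ is functorially finite. Lemma \ref{ff} then yields a finite dimensional silting module $T$ with $\Tcal=\Gen{T}$, and combining with Proposition \ref{afiltgen} gives $\Ccal=\mathfrak{a}(\Gen{T})$, as required. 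The commutativity of the triangle and bijectivity of $\alpha$ follow immediately.

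For the final assertion, any ring epimorphism $f\colon A\to B$ with $\Tor_1^A(B,B)=0$ corresponds via the bijection just established to some $\alpha(T)$ for a finite dimensional silting $T$; the concluding sentence of Theorem \ref{inj} then tells me that $f$ is a universal localisation and that $B$ is finite dimensional. The main point of the argument is therefore the passage from a bireflective extension-closed $\Ccal$ to a torsion class whose finite dimensional part is functorially finite, where Theorem \ref{dij}(5) does the decisive work; the remaining steps are formal once Lemmas \ref{torsion} and \ref{ff} together with Proposition \ref{afiltgen} are in place.
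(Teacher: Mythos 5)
Your argument is correct and follows essentially the same route as the paper: invoking Theorem~\ref{dij}(5) together with Lemma~\ref{ff} to show that every torsion class in $\ModA$ is generated by a finite dimensional silting module, and then reading off surjectivity of $\mathfrak{a}$ from Corollary~\ref{cor inject}. You simply spell out a few steps that the paper leaves implicit (that $\FiltGen{\Ccal}$ is a torsion class via Lemma~\ref{torsion}, and that $\Tcal\cap\modA$ inherits the torsion class axioms), and you handle the closing assertion by the same reference to Theorem~\ref{inj}.
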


\begin{proof} 
By Theorem \ref{dij} and Lemma \ref{ff}, every torsion class in $\ModA$ is generated by a finite dimensional silting $A$-module. The diagram of bijections then follows from Corollary \ref{cor inject}.
For the last statement we refer back to Theorem \ref{inj}.
\end{proof}

\begin{theorem}\label{further eq conditions}
The following statements are equivalent for a finite dimensional algebra $A$.
\begin{enumerate}
\item $A$ is $\tau$-tilting finite.
\item Every torsion class $\Tcal$ in $\ModA$ has the form $\Tcal=\Gen{T}$ for a finite dimensional silting module~$T$.
\item Every silting $A$-module is finite dimensional up to equivalence.
\item There are only finitely many epiclasses of ring epimorphisms  $A\longrightarrow B$ with $\Tor_1^A(B,B)=0$.
\end{enumerate}
\end{theorem}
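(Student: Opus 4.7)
The plan is to close the cycle (1) $\Rightarrow$ (2) $\Rightarrow$ (3) $\Rightarrow$ (1) and, separately, establish the equivalence (1) $\Leftrightarrow$ (4). Two of the arrows in the cycle follow almost immediately from results already proved in the excerpt; the real content lies in (3) $\Rightarrow$ (1), which is the novel half of the main claim.

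First I would prove (1) $\Rightarrow$ (2). Given any torsion class $\Tcal$ in $\ModA$, the intersection $\Tcal \cap \modA$ is a torsion class in $\modA$, which under (1) is functorially finite by Theorem \ref{dij}(5). Lemma \ref{ff} then yields $\Tcal = \Gen{T}$ for some finite dimensional silting module $T$. The implication (2) $\Rightarrow$ (3) is immediate: any silting module $S$ gives rise to a torsion class $\Gen{S}$, so by (2) there is a finite dimensional silting $T$ with $\Gen{S} = \Gen{T}$, which is precisely the definition of $S$ being equivalent to $T$.

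For the equivalence (1) $\Leftrightarrow$ (4) I would invoke the two bijection/injection results already at hand. Under (1), Theorem \ref{bij} supplies a bijection between equivalence classes of finite dimensional silting modules and epiclasses of ring epimorphisms $A \to B$ with $\Tor_1^A(B,B) = 0$, while Theorem \ref{dij}(2) guarantees finiteness of the domain, delivering (4). Conversely, Theorem \ref{inj} exhibits an injection from equivalence classes of finite dimensional silting modules into epiclasses of ring epimorphisms $A \to B$ with $B$ finite dimensional and $\Tor_1^A(B,B) = 0$, a subclass of those counted in (4); hence (4) bounds the number of basic finite dimensional silting modules, and (1) follows from Theorem \ref{dij}(2).

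The main obstacle is (3) $\Rightarrow$ (1). I would argue by contrapositive: assume $A$ is not $\tau$-tilting finite, so by Theorem \ref{dij}(5) there exists a torsion class $\Ucal$ in $\modA$ which is not functorially finite; the aim is to produce a silting $A$-module $T$ that is not equivalent to any finite dimensional silting, directly contradicting (3). The natural candidate silting class is $\Tcal := \varinjlim \Ucal$, which by Lemma \ref{cb} is a torsion class in $\ModA$ with $\Tcal \cap \modA = \Ucal$; if such a $T$ could be produced with $\Gen{T} = \Tcal$, then (3) would force $T$ to be equivalent to some finite dimensional silting $T'$, whence $\Ucal = \gen{T'}$ would be functorially finite, a contradiction. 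The technical heart of the argument is therefore showing that the torsion class $\varinjlim \Ucal$ attached to an arbitrary torsion class $\Ucal$ in $\modA$ is itself silting: one natural path is to verify that it is definable (combining the direct-limit closure with a purity argument of the type used in the proof of Lemma \ref{ff}, via the characterisation of a module as a pure submodule of the product of its finitely generated quotients) and then to invoke the characterisation of silting classes as the definable torsion classes over a finite dimensional algebra. Establishing this last step is where the substantive input beyond the excerpt is required, and I expect it to be the principal difficulty of the proof.
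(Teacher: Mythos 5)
Your treatment of (1)$\Rightarrow$(2), (2)$\Rightarrow$(3) and of the equivalence (1)$\Leftrightarrow$(4) matches the paper. You are also right that (3)$\Rightarrow$(1) is the substantive step, but the obstruction is worse than a missing lemma: the class $\Tcal := \varinjlim \Ucal$ you propose is not a silting class, and no characterisation of silting classes can change that. Silting classes are definable by \cite[Corollary 3.5]{AMV1}, and a definable subcategory $\Dcal$ of $\ModA$ that coincides with the direct limit closure of $\Dcal\cap\modA$ forces $\Dcal\cap\modA$ to be covariantly finite in $\modA$: definable classes are preenveloping (being closed under products and pure submodules), and since a finitely presented $M$ is compact, its $\Dcal$-preenvelope $M\to D=\varinjlim D_i$ with $D_i\in\Dcal\cap\modA$ factors through some $D_i$, which is then a $(\Dcal\cap\modA)$-preenvelope of $M$. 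By Lemma~\ref{cb} one has $\Tcal\cap\modA=\Ucal$, so definability of $\Tcal$ would make $\Ucal$ functorially finite, contrary to its choice. Hence $\Tcal$ is neither definable nor silting, and condition (3) cannot be applied to it.

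The paper therefore chooses a different candidate, one whose siltingness is guaranteed in advance by a closure property rather than by a definability criterion. Arguing as in the proof of \cite[Theorem 3.8]{DIJ}, failure of $\tau$-tilting finiteness yields a strictly decreasing chain $\gen{T_1}\supset\gen{T_2}\supset\cdots$ of functorially finite torsion classes in $\modA$ with each $T_i$ a finite dimensional silting module, and hence a strictly decreasing chain $\Gen{T_1}\supset\Gen{T_2}\supset\cdots$ of silting classes in $\ModA$. The intersection $\bigcap_i\Gen{T_i}$ is definable as an intersection of definable classes, and it is again a silting class by \cite[Corollary 3.8]{AH}. If (3) held, this intersection would equal $\Gen{T}$ for a finite dimensional silting $T$, giving $\bigcap_i\gen{T_i}=\gen{T}$ and contradicting \cite[Lemma 3.10(b)]{DIJ}. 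So the decisive move is to replace $\varinjlim\Ucal$ by an intersection of a chain of silting classes; your $\Tcal$ cannot serve in its place.
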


\begin{proof}
(1) $\Rightarrow$ (2) is Lemma~\ref{ff}, and (2) $\Rightarrow$ (3) is clear. Moreover,  (1) $\Rightarrow$ (4) is a direct consequence of Theorem~\ref{bij} and (4) $\Rightarrow$ (1) follows from Corollary \ref{cor inject}.

(3) $\Rightarrow$ (1):
If $A$ is not $\tau$-{tilting finite}, there must be a torsion class in $\modA$ that is not functorially finite. As argued in the proof of \cite[Theorem 3.8]{DIJ}, it follows that there is an infinite sequence of finite dimensional silting modules $T_1,T_2,\ldots$ inducing  a strictly decreasing chain of torsion classes  
$$\modA\supset\gen{T_1}\supset \gen{T_2}\supset\ldots$$
Passing to $\ModA$, we note that the equality $\Gen{T_i}=\Gen{T_{i+1}}$ would imply  $\add{T_i}=\add{T_{i+1}}$.   Hence, there is also a strictly decreasing chain of torsion classes in $\ModA$ $$\ModA\supset\Gen{T_1}\supset \Gen{T_2}\supset\ldots$$
and its intersection gives rise to a definable torsion class $\bigcap\Gen{T_i}$ which is a silting class as a consequence of \cite[Corollary 3.8]{AH}. By assumption, there is a finite dimensional silting module $T$ such that $\bigcap\Gen{T_i}=\Gen{T}$. It follows  that $\bigcap\gen{T_i}=\Gen{T}\cap\modA=\gen{T}$, contradicting \cite[Lemma 3.10 (b)]{DIJ}.
\end{proof}

Following Theorem \ref{further eq conditions}, we provide a further class of examples of $\tau$-tilting finite algebras. This class was discussed in \cite{GdP}. 

\begin{example}
Suppose that the algebra $A$ fulfills the following two conditions:
\begin{itemize}
\item there are elements $c_1$ and $c_2$ in the center of $A$ such that the algebra $A/c_iA$ is representation-finite for $i=1,2$;
\item the sum of the annihilators of $c_1$ and $c_2$ is $A$.
\end{itemize}
As argued in \cite[Subsection 2.4]{GdP}, then there are only finitely many epiclasses of ring epimorphisms $A\ra B$. In particular, by Theorem \ref{further eq conditions}, $A$ is $\tau$-tilting finite. An explicit example of a representation-infinite algebra fulfilling our assumptions, as found in \cite[Subsection 2.4]{GdP}, is the quotient of the path algebra of 
$$\xymatrix{\bullet\ar@(dl,ul)^{\alpha}\ar[r]^\gamma&\bullet&\bullet\ar[l]_\delta\ar@(dr,ur)_\beta}$$
by the ideal generated by the relations $\alpha^2=\gamma\alpha=\beta^3=\delta\beta^2=0$. The elements $c_1$ and $c_2$ above can be chosen to be, respectively, the elements $\alpha$ and $\beta^2$.
\end{example}

\begin{remark} If $A$ is $\tau$-{tilting finite}, it follows from Theorem \ref{bij} that whenever $A\ra B$ is a ring epimorphism fulfilling $\Tor_1^A(B,B)=0$, then also the algebra $B$ is finite dimensional (and $\tau$-tilting finite). We do not know about the converse. Is it true that $A$ does not admit an infinite dimensional ring epimorphism $A\ra B$ with $\Tor_1^A(B,B)=0$ if and only if it is $\tau$-{tilting finite}?
\end{remark}


\bibliographystyle{amsalpha}

\begin{thebibliography}{A}

\bibitem[AIR]{AIR} T.~Adachi, O.~Iyama and I.~Reiten, {\it $\tau$-tilting theory}, Compos. Math. {\bf 150} (2014), 415--452.
\bibitem[AH]{AH} L.~Angeleri H\"ugel and M. Hrbek, {\it Silting modules over commutative rings}, Int. Math. Res. Not. IMRN {\bf 2017(13)} (2017),  4131--4151.
\bibitem[AMV1]{AMV1} L.~Angeleri H\"ugel, F. Marks and J. Vit\'oria, {\it Silting modules}, Int. Math. Res. Not. IMRN {\bf 2016 (4)}, 1251--1284.
\bibitem[AMV2]{AMV2} L.~Angeleri H\"ugel, F. Marks and J. Vit\'oria, {\it Silting modules and ring epimorphisms}, Adv. Math. {\bf 303} (2016), 1044--1076.
\bibitem[AS]{AS} L.~Angeleri H\"ugel, J. S\'anchez, {\it Tilting modules over tame hereditary algebras}, J. Reine  Angew. Math. {\bf 682} (2013), 1--48.

\bibitem[Asai]{Asai} S. Asai, {\it Semibricks}, preprint (2017), arXiv:1610.05860v4.
\bibitem[CB1]{CB1} W. W. Crawley-Boevey, {\it Locally finitely presented additive categories}, Comm. Algebra {\bf 22} (1994), 1644--1674.
\bibitem[CB2]{CB} W. W. Crawley-Boevey, {\it Infinite-dimensional modules in the representation theory of finite-dimensional algebras}, Algebras and modules, I (Trondheim, 1996), 29--54, CMS Conf. Proc.\textbf{23} Amer. Math. Soc., Providence, RI, 1998.
\bibitem[DIJ]{DIJ} L. Demonet, O. Iyama and G. Jasso, {\it $\tau$-tilting finite algebras, bricks and $g$-vectors}, Int. Math. Res. Not. IMRN, DOI:10.1093/imrn/rnx135.
\bibitem[GdP]{GdP} P.~Gabriel and J.~de la Pe\~na, {\it Quotients of representation-finite algebras}, Comm. Algebra {\bf 15} (1987), no. 1-2, 279--307.
\bibitem[GL]{GL} W.~Geigle and H.~Lenzing, {\it Perpendicular categories with applications to representations and sheaves}, J. Algebra {\bf 144}, (1991), no. 2, 273--343.
\bibitem[IT]{IT} C.~Ingalls and H.~Thomas, {\it Noncrossing partitions and representations of quivers}, Compos. Math. {\bf 145} (2009), no. 6, 1533--1562.
\bibitem[I]{I} O. Iyama, {\it Rejective subcategories of artin algebras and orders}, preprint (2003), arxiv: math/0311281.
\bibitem[MS1]{MS1} F. Marks and J. \v{S}\v{t}ov\'\i\v{c}ek, {\it Torsion classes, wide subcategories and localisations}, Bull. London Math. Soc. \textbf{49} (2017), 405--416. 
\bibitem[MS2]{MS2} F. Marks and J. \v{S}\v{t}ov\'\i\v{c}ek, {\it Universal localisations via silting},  to appear in Proc. Roy. Soc. Edinburgh Sect. A., arxiv.org/abs/1605.04222.
\bibitem[Mi]{Mi} Y. Mizuno,  {\it Classifying $\tau$-tilting modules over preprojective algebras of Dynkin type}, Math. Z. {\bf 277} (2014), 665--690.
\bibitem[RR]{RR}
I.~Reiten and C.~M.~Ringel, {\it Infinite dimensional representations of canonical algebras}, {Canad. J. Math.} {\bf 58} (2006), 180--224.
\bibitem[R]{R} C.~M.~Ringel, {\it Wild Algebras: Two Examples}, preprint (2013), arxiv.org/abs//1307.6509.
\bibitem[Sch]{Sch} A.~H.~Schofield, {\it Representations of rings over skew fields}, Cambridge University Press, Cambridge, 1985.
\bibitem[Wei]{Wei} J. Wei, {\it Semi-tilting complexes}, Israel J. Math.  \textbf{194}, no. 2 (2013), 871--893.

\end{thebibliography}

\end{document}